\documentclass[11pt]{amsart}

\usepackage{graphicx}
\usepackage{epsfig}

\usepackage{amsmath}
\usepackage{amssymb}
\usepackage{bigints}
\usepackage{tikz}
\usepackage{graphicx}
\usepackage{epsfig}
\usepackage[noadjust]{cite}

\usepackage{multicol}
\usepackage{xcolor}

\usetikzlibrary{shapes,arrows} 


\usetikzlibrary{decorations.markings}
\tikzset{
    set arrow inside/.code={\pgfqkeys{/tikz/arrow inside}{#1}},
    set arrow inside={end/.initial=>, opt/.initial=},
    /pgf/decoration/Mark/.style={
        mark/.expanded=at position #1 with
        {
            \noexpand\arrow[\pgfkeysvalueof{/tikz/arrow inside/opt}]{\pgfkeysvalueof{/tikz/arrow inside/end}}
        }
    },
    arrow inside/.style 2 args={
        set arrow inside={#1},
        postaction={
            decorate,decoration={
                markings,Mark/.list={#2}
            }
        }
    },
}

\topmargin 0mm \evensidemargin 15mm \oddsidemargin 15mm \textwidth
140mm \textheight 230mm

\theoremstyle{plain}
\newtheorem{theorem}                {Theorem}      [section]
\newtheorem*{theorem*}                {Theorem \ref{thm:appl}}
\newtheorem{proposition}  [theorem]  {Proposition}

\theoremstyle{definition}

\newtheorem{remark}       [theorem]  {Remark}

\DeclareMathOperator{\trace}{trace} 

\DeclareMathOperator{\Div}{div} 
 
\DeclareMathOperator{\ricci}{Ricci}

 \DeclareMathOperator{\Hess}{Hess}
 
\DeclareMathOperator{\grad}{grad}

\DeclareMathOperator{\Int}{Int}
\DeclareMathOperator{\Graf}{Graf}
\numberwithin{equation}{section}

\begin{document}

\title[On the uniqueness of complete biconservative surfaces in $\mathbb{R}^3$]
{On the uniqueness of complete biconservative surfaces in $\mathbb{R}^3$}

\author{Simona~Nistor}
\author{Cezar~Oniciuc}

\address{Faculty of Mathematics - Research Department\\ Al. I. Cuza University of Iasi\\
Bd. Carol I, 11 \\ 700506 Iasi, Romania} \email{nistor.simona@ymail.com}

\address{Faculty of Mathematics\\ Al. I. Cuza University of Iasi\\
Bd. Carol I, 11 \\ 700506 Iasi, Romania} \email{oniciucc@uaic.ro}


\subjclass[2010]{Primary 53A05; Secondary 53C42, 57N05}

\keywords{Biconservative surfaces, complete surfaces, mean curvature function, real space forms}

\begin{abstract}
We study the uniqueness of complete biconservative surfaces in the Euclidean space $\mathbb{R}^3$, and prove that the only complete biconservative regular surfaces in $\mathbb{R}^3$ are either $CMC$ or certain surfaces of revolution. In particular, any compact biconservative regular surface in $\mathbb{R}^3$ is a round sphere.
\end{abstract}

\maketitle
\section{Introduction}
In the last years, the theory of \textit{biconservative submanifolds} proved to be a very interesting research topic, as shown, for example, in \cite{CMOP14,FOP15,F15,FT16,MOR16,N16,N17,NO17,S12,S15,T15,UT16}. This topic arose from the theory of \textit{biharmonic submanifolds}, but the class of biconservative submanifolds is richer than the former one.

In order to define biharmonic submanifolds, let us first consider $\left(M^m,g\right)$ and $\left(N^n,h\right)$ two Riemannian manifolds and the \textit{bienergy functional}
$$
E_2:C^{\infty}(M,N)\rightarrow\mathbb{R},\quad E_{2}(\varphi)=\frac{1}{2}\int_{M}|\tau(\varphi)|^{2}\ v_g,
$$
where $\tau(\varphi)=\trace_g\nabla d\varphi$ is the \textit{tension field} of a smooth map $\varphi:M\to N$ with respect to the fixed metrics $g$ and $h$. A \textit{biharmonic map}, as suggested by J. Eells and J.H. Sampson in \cite{ES64}, is a critical point of the bienergy functional. The corresponding Euler-Lagrange equation, obtained by G.Y. Jiang in \cite{J86}, is
$$
\tau_{2}(\varphi)=-\Delta^{\varphi}\tau(\varphi)-\trace_g R^N(d\varphi,\tau(\varphi))d\varphi=0,
$$
where $\tau_{2}(\varphi)$ is called the \textit{bitension field} of $\varphi$, $\Delta^{\varphi}=-\trace_{g}
\left(\nabla^{\varphi}\nabla^{\varphi}-\nabla^{\varphi}_{\nabla}\right)$ is the rough Laplacian defined on sections of $\varphi^{-1}(TN)$, and $R^N$ is the curvature tensor field of $N$ given by
$$
R^N(X,Y)Z=[\nabla^N_X,\nabla^N_Y]Z-\nabla^N_{[X,Y]}Z.
$$
An isometric immersion $\varphi:\left(M^m,g\right)\to \left(N^n,h\right)$ or, simply, a submanifold $M$ of $N$, is called biharmonic if $\varphi$ is a biharmonic map. Any harmonic map is biharmonic and, therefore, we are interested in studying biharmonic and non-harmonic maps which are called \textit{proper-biharmonic}. As a submanifold $M$ of $N$ is minimal if and only if the map $\varphi:(M,g)\to (N,h)$ is harmonic, by a proper-biharmonic submanifold we mean a non-minimal biharmonic submanifold.

Following D. Hilbert (\cite{H24}), to an arbitrary functional $E$ we can associate a symmetric tensor field $S$ of type $(1,1)$, called the \textit{stress-energy tensor}, which is conservative, i.e., $\Div S=0$, at the critical points of $E$. In the particular case of the bienergy functional $E_2$, G.Y. Jiang (\cite{J87}) defined the stress-bienergy tensor $S_2$ by
\begin{align*}
\langle S_2(X),Y\rangle=&\frac{1}{2}|\tau(\varphi)|^2\langle X,Y\rangle+\langle d\varphi,\nabla\tau(\varphi)\rangle\langle X,Y\rangle\\&-\langle d\varphi(X),\nabla_Y\tau(\varphi)\rangle-\langle d\varphi(Y),\nabla_X\tau(\varphi)\rangle,
\end{align*}
and proved that
$$
\Div S_2=-\langle\tau_2(\varphi),d\varphi\rangle.
$$
Therefore, if $\varphi$ is biharmonic, i.e., is a critical point of $E_2$, then $\Div S_2=0$. The variational meaning of $S_2$ was explained in \cite{LMO08}.

One can see that if $\varphi:\left(M^m,g\right)\to\left(N^n,h\right)$ is an isometric immersion, then $\Div S_2=0$ if and only if the tangent part of the bitension field associated to $\varphi$ vanishes. A submanifold $M$ is called biconservative if $\Div S_2=0$.

The surfaces with constant non-zero mean curvature function ($CMC$ surfaces) and the minimal surfaces in $3$-dimensional spaces with constant sectional curvature $N^3(c)$ are trivially biconservative, and the explicit local parametric equations of biconservative surfaces in $\mathbb{R}^3$, $\mathbb{S}^3$ and $\mathbb{H}^3$, with $\grad f$ nowhere vanishing, where $f$ denotes the mean curvature function, were determined in \cite{CMOP14} and \cite{F15}. When the ambient space is $\mathbb{R}^3$ the result in \cite{HV95} was also reobtained in \cite{CMOP14}.

Even if the notion of biconservative submanifolds belongs to the extrinsic geometry, in the particular case of biconservative surfaces in $N^3(c)$, they also admit an intrinsic characterization that was found in \cite{FNO16}.

In \cite{N16} we extended the local classification results for biconservative surfaces in $N^3(c)$, with $c=0$ and $c=1$, to global results, i.e., we asked the biconservative surfaces to be complete and with $\grad f\neq 0$ on a non-empty open subset (non-$CMC$ surfaces). More precisely, we constructed, from the intrinsic and extrinsic point of view, complete biconservative surfaces in $\mathbb{R}^3$ and $\mathbb{S}^3$ with $\grad f$ nowhere vanishing on an open dense subset.

The paper is organized as follows. After a short section where we recall some results on biconservative surfaces in $\mathbb{R}^3$, we give, in the third section, some uniqueness results concerning complete non-$CMC$ biconservative surfaces in $\mathbb{R}^3$. We prove that the only complete non-$CMC$ biconservative regular surfaces in $\mathbb{R}^3$ are the known ones (see Theorem \ref{th:nonCMCbicons}) and that there exists no compact non-$CMC$ biconservative regular surface in $\mathbb{R}^3$ (see Theorem \ref{th:main-result-subsect}).

\vspace{0.2cm}

\noindent {\bf Convention.} All manifolds are assumed to be smooth, connected and oriented.

\section{Preliminaries}

As we have already seen, a biconservative submanifold $\varphi:M^m\to N^n$ is characterized by $\tau_2(\varphi)^\top=0$. If we consider the case of hypersurfaces $M^m$ in $N^{m+1}$, we get that $M$ is biconservative if and only if
$$
2A(\grad f)+f\grad f-2f(\ricci^N(\eta))^{\top}=0,
$$
where $\eta$ is the unit normal of $M$ in $N$, $A$ is the shape operator, $f=\trace A$ is the mean curvature function, and $(\ricci^N(\eta))^{\top}$ is the tangent component of the Ricci curvature of $N$ in the direction of $\eta$ (see ~\cite{LMO08,O10}).

Moreover, if $\varphi:M^m\to N^{m+1}(c)$ is a hypersurface, then it is biconservative if and only if
\begin{equation}\label{eq:bicons}
A(\grad f)=-\frac{f}{2}\grad f.
\end{equation}
It is then easy to see that any $CMC$ or minimal hypersurface in $N^{m+1}(c)$ is biconservative and, therefore, when studying biconservative surfaces in space forms we are interested in the non-$CMC$ case. Here, by a non-$CMC$ hypersurface we mean a hypersurface with $\grad f$ different from zero at some points but there could be points where $\grad f$ vanishes.

A direct consequence of ~\eqref{eq:bicons} is the following proposition.

\begin{proposition}\label{cor:eqf}
Let $M^m$ be a biconservative hypersurface in $N^{m+1}(c)$. Then
$$
f\Delta f-3|\grad f|^2-2\langle A,\Hess f\rangle=0.
$$
\end{proposition}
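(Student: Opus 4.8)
The plan is to derive the identity by taking the divergence of the biconservative equation~\eqref{eq:bicons}. Rewrite~\eqref{eq:bicons} as the vanishing of the tangent vector field
\[
X:=A(\grad f)+\frac{f}{2}\,\grad f\equiv 0,
\]
so that $\Div X=0$ identically on $M$. It then suffices to compute $\Div\big(A(\grad f)\big)$ and $\Div(f\,\grad f)$ separately and add them.

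For the second term I would use the product rule $\Div(f\,\grad f)=f\,\Div(\grad f)+|\grad f|^2$ together with the sign convention $\Delta f=-\trace\Hess f=-\Div(\grad f)$ fixed in the Introduction, which gives $\Div(f\,\grad f)=|\grad f|^2-f\,\Delta f$. For the first term, fix a local orthonormal frame $\{e_i\}$ and expand
\[
\Div\big(A(\grad f)\big)=\sum_i\big\langle(\nabla_{e_i}A)(\grad f),e_i\big\rangle+\sum_i\big\langle A(\nabla_{e_i}\grad f),e_i\big\rangle .
\]
The second sum equals $\langle A,\Hess f\rangle$ by the symmetry of $A$ and of $\Hess f$. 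For the first sum, using that $A$, hence $\nabla A$, is symmetric, it rewrites as $\langle\Div A,\grad f\rangle$ with $\Div A:=\sum_i(\nabla_{e_i}A)e_i$; invoking the Codazzi equation $(\nabla_XA)Y=(\nabla_YA)X$ of a hypersurface in the space form $N^{m+1}(c)$ one gets, for every $X$, that $\langle\Div A,X\rangle=\trace(\nabla_XA)=X(\trace A)=X(f)$, i.e. $\Div A=\grad f$. Thus $\Div\big(A(\grad f)\big)=|\grad f|^2+\langle A,\Hess f\rangle$.

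Adding the two computations and using $\Div X=0$ gives $|\grad f|^2+\langle A,\Hess f\rangle+\frac{1}{2}\big(|\grad f|^2-f\,\Delta f\big)=0$, and multiplying by $2$ yields precisely $f\,\Delta f-3|\grad f|^2-2\langle A,\Hess f\rangle=0$. The only nontrivial ingredient is the identity $\Div A=\grad f$, which relies on Codazzi; the rest is the product rule and careful bookkeeping with the sign convention for $\Delta$. I therefore expect the Codazzi step to be the point requiring the most care — in particular, checking that the ambient curvature produces no extra term, which it does not because the curvature contribution to Codazzi vanishes for a space form.
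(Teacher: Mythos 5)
Your proof is correct and is exactly the intended argument: the paper states this proposition without proof as a ``direct consequence'' of \eqref{eq:bicons}, and the standard derivation is precisely your divergence computation, with $\Div A=\grad f$ via Codazzi and the sign convention $\Delta f=-\Div(\grad f)$ handled correctly. The only cosmetic slip is that the final step is a multiplication by $-2$ rather than $2$.
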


If we restrict our study to biconservative surfaces in $N^3(c)$ with $\grad f\neq 0$ at any point of $M$, we have the following local result.

\begin{theorem}[\cite{CMOP14}]\label{th:f-n3c}
Let $\varphi:M^2\to N^3(c)$ be a biconservative surface with $\grad f$ nowhere vanishing. Then, we have $f>0$ and
\begin{equation}\label{f-bicons}
f\Delta f+|\grad f|^2+\frac{4}{3}c f^2-f^4=0,
\end{equation}
where $\Delta$ is the Laplace-Beltrami operator on $M$.
\end{theorem}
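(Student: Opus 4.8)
The plan is to combine the biconservative equation \eqref{eq:bicons} with the Codazzi equation for a surface in $N^3(c)$, and then pick a good local frame adapted to $\grad f$. First I would set up, on the open set where $\grad f\neq 0$ (which is all of $M$ by hypothesis), the orthonormal frame $\{X_1,X_2\}$ with $X_1=\grad f/|\grad f|$. Equation \eqref{eq:bicons} says $X_1$ is an eigenvector of the shape operator $A$ with eigenvalue $-f/2$; since $\trace A=f$, the other eigenvalue is $3f/2$, so $AX_1=-(f/2)X_1$ and $AX_2=(3f/2)X_2$. In particular $A$ has two distinct principal curvatures everywhere, so the surface has no umbilic points and $f>0$ cannot vanish; one still needs to rule out $f<0$, which I expect to follow from orientation/eigenvalue bookkeeping or, more robustly, from the sign forced by the computation below (the term $f^4$ must dominate).

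Next I would extract the structure equations. Writing $\nabla_{X_i}X_j$ in terms of the connection coefficients, the Codazzi equation $(\nabla_{X_1}A)X_2-(\nabla_{X_2}A)X_1=0$ (valid since $N^3(c)$ has constant curvature) gives two scalar identities relating $X_1(f)$, $X_2(f)$ and the connection coefficients. Since $\grad f$ is proportional to $X_1$, we have $X_2(f)=0$, which simplifies these relations; one of them expresses $\nabla_{X_1}X_1$ and $\nabla_{X_2}X_2$ in terms of $f$ and $X_1(f)$, and in particular yields a formula for $\di(\grad f)=\Delta f$ — more precisely the Laplacian here should be taken with the sign convention making $\Delta f = \di(\grad f)$, consistent with Proposition~\ref{cor:eqf} — in terms of $f$, $|\grad f|^2$ and $X_1(X_1(f))$.

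The core computation is then to feed this into Proposition~\ref{cor:eqf}. With $A$ diagonalized as above, $\langle A,\Hess f\rangle=-\tfrac{f}{2}\Hess f(X_1,X_1)+\tfrac{3f}{2}\Hess f(X_2,X_2)$, and each Hessian term can be written using the connection coefficients and $X_1(f)$; similarly $\Delta f=-\Hess f(X_1,X_1)-\Hess f(X_2,X_2)$. Substituting everything into $f\Delta f-3|\grad f|^2-2\langle A,\Hess f\rangle=0$ from Proposition~\ref{cor:eqf} should, after the $\Hess f(X_1,X_1)$ and $\Hess f(X_2,X_2)$ terms recombine, collapse to a first-order relation; but to reach the stated fourth-order-in-$f$ identity \eqref{f-bicons} one also needs the Gauss equation, which relates the intrinsic Gauss curvature $K$ of $M$ to the product of principal curvatures plus $c$, i.e. $K = c + \det A = c - \tfrac{3}{4}f^2$. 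Combining the Codazzi-derived expression for $\Delta f$ (which involves $K$ through the term $X_2(\nabla_{X_2}X_2$-coefficient$)$) with this Gauss relation is what introduces the $cf^2$ and $f^4$ terms. Finally, rearranging yields $f\Delta f+|\grad f|^2+\tfrac{4}{3}cf^2-f^4=0$, and reading off the sign of the constant term pins down $f>0$.

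The main obstacle is bookkeeping: keeping the sign conventions for $\Delta$, $A$, $K$ and $R^N$ mutually consistent, and correctly tracking how the connection coefficients of the adapted frame enter both $\Hess f$ and $\di(\grad f)$; the conceptual content is just Codazzi + Gauss + \eqref{eq:bicons}, but the factor of $4/3$ in front of $cf^2$ is easy to get wrong, so I would double-check it against the known local parametrizations in \cite{CMOP14}.
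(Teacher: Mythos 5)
Your overall strategy is the right one and is essentially the proof in \cite{CMOP14} (the present paper only cites the result): take the frame $X_1=\grad f/|\grad f|$, diagonalize $A$ with eigenvalues $-f/2$ and $3f/2$, use Codazzi to get $\nabla_{X_1}X_1=0$ and $\nabla_{X_2}X_2=\omega\,X_1$ with $\omega=\tfrac{3X_1(f)}{4f}$ (Proposition~\ref{cor:eqf} plus the Hessian computation indeed collapses to exactly this same first-order relation, as you predict), and then combine the intrinsic expression $K=X_1(\omega)-\omega^2$ with the Gauss equation $K=c-\tfrac34 f^2$ to produce \eqref{f-bicons}. One sign remark: the convention consistent with Proposition~\ref{cor:eqf} and with \eqref{f-bicons} is $\Delta f=-\di(\grad f)$, not $\di(\grad f)$ as you wrote in the parenthetical (your later displayed formula $\Delta f=-\Hess f(X_1,X_1)-\Hess f(X_2,X_2)$ is the correct one).

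The genuine gap is the claim $f>0$, which is part of the conclusion. Your first argument is circular: the two principal curvatures $-f/2$ and $3f/2$ are distinct precisely when $f\neq 0$, and at a point with $f=0$ the relation $A(\grad f)=-\tfrac f2\grad f$ is trivially satisfied with $A$ umbilic, so ``no umbilic points'' is what you are trying to prove, not a given. Your fallback (``the term $f^4$ must dominate'') also fails, because \eqref{f-bicons} is invariant under $f\mapsto -f$ and therefore cannot detect the sign of $f$. The actual argument is already inside the Codazzi computation you set up: the $X_2$-component of $(\nabla_{X_1}A)X_2=(\nabla_{X_2}A)X_1$ reads $\tfrac32 X_1(f)=2f\omega$, and since $X_1(f)=|\grad f|>0$ everywhere, this forces $f\neq 0$ at every point; by connectedness $f$ has constant sign, and replacing $\eta$ by $-\eta$ if necessary (which changes the sign of both $A$ and $f$ and preserves \eqref{eq:bicons}) one may take $f>0$. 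With that inserted, the rest of your outline goes through.
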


Next, we recall some details about the construction of complete biconservative surfaces in $\mathbb{R}^3$ with $\grad f\neq 0$ at any point of an on an open dense subset, that was presented in \cite{MOR16,N16,NO17}. First, this construction is done from the extrinsic point of view, and then, from the intrinsic point of view. When using the extrinsic approach, we work, basically, with the images. In the intrinsic approach we work with immersions. In this case, the domain of the biconservative immersion is an abstract surface which is complete and simply connected.

We begin with the following \textit{local extrinsic} result which provides a characterization of biconservative surfaces in $\mathbb{R}^3$.

\begin{theorem}[\cite{HV95}]
Let $M^2$ be a surface in $\mathbb{R}^3$ with $(\grad f)(p)\neq0$ for any $p\in M$. Then, $M$ is biconservative if and only if, locally, it is a surface of revolution, and the curvature $\kappa=\kappa(u)$ of the profile curve $\sigma=\sigma(u)$, $\left|\sigma^\prime(u)\right|=1$, is a positive solution of the following $ODE$
$$
\kappa^{\prime\prime}\kappa=\frac{7}{4}\left(\kappa^\prime\right)^2-4\kappa^4.
$$
\end{theorem}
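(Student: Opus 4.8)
The plan is to prove the theorem in two directions. For the "only if" part, assume $M^2\subset\mathbb{R}^3$ is biconservative with $(\grad f)(p)\neq 0$ everywhere. Then $\grad f$ spans a one-dimensional distribution, and by \eqref{eq:bicons} with $c=0$ we have $A(\grad f)=-\frac{f}{2}\grad f$, so $\grad f/|\grad f|=:X_1$ is an eigenvector of the shape operator with eigenvalue $-f/2$. Since $f=\trace A$, the other principal curvature is $\lambda_2=f-(-f/2)=3f/2$, and the principal direction $X_2$ orthogonal to $X_1$ satisfies $X_2(f)=0$ because $X_2\perp\grad f$. I would first show that the integral curves of $X_2$ are lines of curvature along which $f$ is constant, and that they are (images of) circles: the idea is that $X_1,X_2$ form an orthonormal principal frame, one computes $\nabla_{X_2}X_2$ and shows it points along $X_1$ with the right coefficient, so the $X_2$-curves are planar curves of constant curvature, i.e. circles, all orthogonal to the one-parameter family of meridian curves obtained by flowing along $X_1$. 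This is exactly the local structure of a surface of revolution, with the meridian $\sigma=\sigma(u)$ being an integral curve of $X_1$ parametrized by arclength $u$.

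Next I would identify the curvature function $\kappa=\kappa(u)$ of the profile curve $\sigma$ with (up to sign and a choice of which principal curvature) one of the principal curvatures of $M$. For a surface of revolution with unit-speed profile curve $\sigma(u)=(\varrho(u),z(u))$, the principal curvature in the meridian direction is precisely the curvature $\kappa$ of $\sigma$, and this meridian direction is the $X_1$-direction (the one carrying $\grad f$, since $f$ varies only along meridians). Hence $\kappa=-f/2$ up to orientation, equivalently $f=-2\kappa$ (choosing orientations so that $\kappa>0$ corresponds to $f>0$; by Theorem \ref{th:f-n3c} we know $f>0$). Now I substitute $f=2\kappa$ into the equation \eqref{f-bicons} from Theorem \ref{th:f-n3c} with $c=0$, namely $f\Delta f+|\grad f|^2-f^4=0$. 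Along the meridian, $\grad f$ is tangent to the unit-speed meridian, so $|\grad f|^2=(f')^2$ where $'=d/du$; computing $\Delta f$ on a surface of revolution gives $\Delta f=-f''-\frac{\varrho'}{\varrho}f'$ (sign conventions aside), and one needs to eliminate the $\varrho'/\varrho$ term. The key is that on the surface of revolution $\varrho'/\varrho$ is itself expressible through the second principal curvature $\lambda_2=3f/2$: indeed $\lambda_2=\varrho'/\big(\varrho\sqrt{1-(\varrho')^2}\cdots\big)$ — more cleanly, the Gauss equation gives $K=\kappa_1\kappa_2$ and also $K=-\varrho''/\varrho$, which relates $\varrho$ to $\kappa$. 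Combining these relations turns \eqref{f-bicons} into an autonomous second-order ODE for $\kappa=\kappa(u)$ alone.

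Carrying out the elimination is where the real work lies, and the main obstacle is bookkeeping: correctly tracking sign conventions for $A$, $f$, $\kappa$, and the Laplacian, and correctly expressing the "horizontal radius" $\varrho$ and its derivatives in terms of $\kappa$ via the structure equations of the surface of revolution (the relations $\lambda_2=3f/2=3\kappa$, $K=\lambda_1\lambda_2=-3\kappa^2/\text{(something)}$, and $\varrho''=-K\varrho$). Once $\varrho'/\varrho$ is written purely in terms of $\kappa$ and $\kappa'$, substitution into $f\Delta f+(f')^2-f^4=0$ with $f=2\kappa$ and simplification should yield exactly $\kappa''\kappa=\frac{7}{4}(\kappa')^2-4\kappa^4$. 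I would double-check the coefficient $7/4$ and $4$ by a consistency check against a known explicit biconservative surface of revolution from \cite{CMOP14} or \cite{MOR16}. For the converse "if" direction, the argument runs backward: given a surface of revolution whose profile curvature solves the stated ODE, set $f:=2\kappa$ (so $f>0$), verify that $\grad f$ is along the meridian hence an eigenvector of $A$ with eigenvalue $-f/2$ (the meridian principal curvature being $\kappa=f/2$ up to orientation — here one fixes the orientation of the normal so that this holds), and conclude \eqref{eq:bicons} holds, i.e. $M$ is biconservative; the ODE is precisely the integrability condition ensuring $\grad f\neq 0$ is consistent with \eqref{f-bicons}, which is equivalent to \eqref{eq:bicons} under $\grad f\neq 0$ by Theorem \ref{th:f-n3c} and its converse.
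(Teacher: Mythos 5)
This theorem is quoted in the paper from \cite{HV95} without proof, so there is no internal argument to compare against; your outline has to be judged on its own. The forward direction is the standard route and the computation does close up: from \eqref{eq:bicons} the principal curvatures are $\kappa_1=-f/2$ in the direction of $\grad f$ and $\kappa_2=3f/2$, so with $f>0$ (Theorem \ref{th:f-n3c}) and $\kappa=|\kappa_1|$ one gets $f=2\kappa$, $\kappa_2=3\kappa$. The cleanest elimination of $\varrho'/\varrho$ is via Codazzi for a rotation surface, $\kappa_2'=(\kappa_1-\kappa_2)\varrho'/\varrho$, which gives $\varrho'/\varrho=-3\kappa'/(4\kappa)$; substituting into $f\Delta f+|\grad f|^2-f^4=0$ with $\Delta f=-f''-\frac{\varrho'}{\varrho}f'$ (the paper's sign convention for $\Delta$) yields $-4\kappa\kappa''+7(\kappa')^2-16\kappa^4=0$, i.e.\ exactly the stated ODE. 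Your proposed route through $K=\kappa_1\kappa_2=-\varrho''/\varrho$ together with $\kappa_2=z'/\varrho=3\kappa$ and $(\varrho')^2+(z')^2=1$ produces the same identity, so the coefficients $7/4$ and $4$ are correct. The reduction to a surface of revolution (level curves of $f$ are circles, integral curves of $\grad f/|\grad f|$ are geodesic meridians) is only sketched, and showing that the family of circles actually shares a common axis takes a little more than ``$\nabla_{X_2}X_2$ points along $X_1$''; but this is the known argument of \cite{CMOP14} and \cite{HV95}, so I accept it as a sketch.

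The genuine gap is in the converse. You ``set $f:=2\kappa$'', but $f$ is determined by the surface: for a surface of revolution $f=\kappa_1+\kappa_2$ with $\kappa_2=z'/\varrho$, and biconservativity is equivalent to the pointwise relation $3\kappa_1+\kappa_2=0$, which does \emph{not} follow from the ODE satisfied by $\kappa$ alone. Indeed, rotating the same unit-speed profile curve about a different parallel axis leaves $\kappa$ (hence the ODE) unchanged but changes $\varrho$ and therefore $\kappa_2$, destroying biconservativity. To close the converse you must exhibit the correct placement of the profile curve relative to the axis: the ODE admits the first integral making $\varrho=c\,\kappa^{-3/4}$ compatible with $(\varrho')^2+9\kappa^2\varrho^2=1$ for an appropriate constant $c$, and only for that choice does $\kappa_2=3\kappa=-3\kappa_1$ hold, whence \eqref{eq:bicons}. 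This imprecision is arguably already present in the statement as it is usually quoted, but your sketch inherits it rather than repairing it, and as written the step ``the ODE is precisely the integrability condition'' is asserted, not proved.
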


In \cite{CMOP14} there was found the local explicit parametric equation of a biconservative surface in $\mathbb{R}^3$.

\begin{theorem}[\cite{CMOP14}]\label{th3.3}
Let $M^2$ be a biconservative surface in $\mathbb{R}^3$ with $(\grad f)(p)\neq0$ for any $p\in M$. Then, locally, the surface can be parametrized by
$$
X_{\tilde{C}_0}(\rho,v)=\left(\rho\cos v,\rho \sin v, u_{\tilde{C}_0}(\rho)\right),
$$
where
$$
u_{\tilde{C}_0}(\rho)=\frac{3}{2\tilde{C}_0}\left(\rho^{1/3}\sqrt{\tilde{C}_0\rho^{2/3}-1}+\frac{1}{\sqrt{\tilde{C}_0}}\log\left(\sqrt{\tilde{C}_0}\rho^{1/3}+\sqrt{\tilde{C}_0\rho^{2/3}-1}\right)\right)
$$
with $\tilde{C}_0$ a positive constant and $\rho\in\left({\tilde{C}_0}^{-3/2},\infty\right)$.
\end{theorem}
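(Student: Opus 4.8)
The plan is to build on the structure theorem of \cite{HV95} recalled just above: around any of its points, $M$ is, up to an isometry of $\mathbb{R}^3$, a surface of revolution about the $z$-axis. Since $\grad f$ never vanishes, $f$ is non-constant, so the profile curve is nowhere a segment parallel or orthogonal to the axis; hence the distance $\rho$ to the axis is a legitimate local parameter along the profile curve away from its critical points, and there I would write $M$ locally as the graph
$$
X(\rho,v)=\left(\rho\cos v,\rho\sin v,u(\rho)\right),
$$
so that the problem reduces to determining the single function $u=u(\rho)$.

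A direct computation gives the principal curvatures of this graph, namely the meridian curvature $\kappa_1=u''/(1+(u')^2)^{3/2}$ and the parallel curvature $\kappa_2=u'/(\rho\sqrt{1+(u')^2})$, with mean curvature $f=\kappa_1+\kappa_2$. Because $f$ is rotation-invariant it depends only on $\rho$, so $\grad f$ is a nowhere-zero multiple of $X_\rho$; as $X_\rho$ is tangent to a meridian it spans a principal direction, and \eqref{eq:bicons} then forces $\kappa_1=-f/2$, equivalently $\kappa_2=-3\kappa_1$. Clearing denominators, this is the second order ODE
$$
u'\left(1+(u')^2\right)=-3\rho\,u''.
$$

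I would then integrate in two stages. With $p=u'$ the equation is separable, $-3\,dp/\!\left(p(1+p^2)\right)=d\rho/\rho$; partial fractions ($1/(p(1+p^2))=1/p-p/(1+p^2)$) and one integration give $\left(1+(u')^2\right)/(u')^2=\tilde{C}_0\,\rho^{2/3}$ for a positive constant $\tilde{C}_0$, whence
$$
u'(\rho)=\left(\tilde{C}_0\,\rho^{2/3}-1\right)^{-1/2},
$$
the opposite sign merely reflecting $u\mapsto -u$; reality of the right-hand side confines $\rho$ to $\left(\tilde{C}_0^{-3/2},\infty\right)$, which is the stated domain. A second integration, via the substitution $\rho=t^3$ together with the splitting $t^2/\sqrt{\tilde{C}_0 t^2-1}=\tilde{C}_0^{-1}\!\left(\sqrt{\tilde{C}_0 t^2-1}+1/\sqrt{\tilde{C}_0 t^2-1}\right)$ whose two pieces are elementary (hyperbolic substitution $\sqrt{\tilde{C}_0}\,t=\cosh\theta$), yields exactly $u(\rho)=u_{\tilde{C}_0}(\rho)$, the additive constant being absorbed by a vertical translation.

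The step I expect to be delicate is not this integration (routine, though one must track the constants carefully) but the reduction to the graph form near the ``neck'': $X_{\tilde{C}_0}$ misses the circle $\rho=\tilde{C}_0^{-3/2}$, where $u'\to\infty$ so the profile curve meets the axis direction orthogonally and $\rho$ ceases to be a parameter; that locus has to be handled by the evident reflection symmetry, and it is the reason the conclusion is only a local one. Finally, since once the surface-of-revolution structure is granted every implication above is reversible, the surfaces $X_{\tilde{C}_0}$ are genuinely biconservative, so the argument also yields the converse.
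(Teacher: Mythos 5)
Your derivation is correct and lands exactly on the stated formula; note that the paper itself does not prove this theorem but quotes it from \cite{CMOP14}, so there is no in-text argument to compare against, and your route (invoke the \cite{HV95} rotational structure, write the surface as a graph $z=u(\rho)$, translate \eqref{eq:bicons} into $\kappa_2=-3\kappa_1$, i.e. $u'\left(1+(u')^2\right)=-3\rho u''$, and integrate twice) is the natural and essentially standard one. The computations check out: the first integration gives $\left(1+(u')^2\right)/(u')^2=\tilde{C}_0\rho^{2/3}$, the domain restriction $\rho>\tilde{C}_0^{-3/2}$ falls out of positivity of $(u')^{-2}$, and the hyperbolic substitution reproduces $u_{\tilde{C}_0}$ on the nose.

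The one place your write-up is looser than it needs to be is the step you yourself flag as delicate: since the theorem asserts the graph form near \emph{every} point of $M$, you must actually rule out points where the profile curve has vertical tangent (where $\rho$ fails as a parameter), not merely defer them to ``the evident reflection symmetry.'' This is not a real obstruction, but it deserves the explicit one-line argument: parametrize the profile by arclength as $\left(\rho(s),z(s)\right)$ with $(\rho')^2+(z')^2=1$; on the set where $\grad f\neq 0$ the biconservative equation gives $f=\tfrac{2}{3}\kappa_2=\tfrac{2z'}{3\rho}$, and at a point with $\rho'(s_0)=0$ one has $z'(s_0)=\pm1$, hence $z''(s_0)=-\rho'(s_0)\rho''(s_0)/z'(s_0)=0$ and therefore $f'(s_0)=\tfrac{2}{3}\left(z''\rho-z'\rho'\right)(s_0)/\rho^2(s_0)=0$, i.e. $\grad f(s_0)=0$, contradicting the hypothesis. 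Thus vertical tangents occur only on the excluded locus (the boundary circle $\rho=\tilde{C}_0^{-3/2}$, where indeed $|\grad f|^2=(f')^2/\left(1+(u')^2\right)\to 0$), and $\rho$ is a legitimate parameter near every point of $M$. With that inserted, your proof is complete.
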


We denote by $S_{\tilde{C}_0}$ the image $X_{\tilde{C}_0}\left(\left({\tilde{C}_0}^{-3/2},\infty\right)\times\mathbb{R}\right)$. We note that any two such surfaces $S_{\tilde{C}_0}$  and $S_{\tilde{C}^{'}_0}$, $\tilde{C}_0\neq \tilde{C}^{'}_0$, are not locally isometric, so we have a one-parameter family of biconservative surfaces in $\mathbb{R}^3$. These surfaces are not complete, but they are regular surfaces in $\mathbb{R}^3$ since the immersions defining them are embeddings.

We define the ``boundary'' of $S_{\tilde{C}_0}$ by $\overline{S}_{\tilde{C}_0}\setminus S_{\tilde{C}_0}$, where $\overline{S}_{\tilde{C}_0}$ is the closure of $S_{\tilde{C}_0}$ in $\mathbb{R}^3$.

The boundary of $S_{\tilde{C}_0}$ is the circle $\left({\tilde{C}_0}^{-3/2}\cos v,{\tilde{C}_0}^{-3/2}\sin v,0\right)$, which lies in the $Oxy$ plane. At a boundary point, the tangent plane to $\overline{S}_{\tilde{C}_0}$ is parallel to $Oz$. Moreover, along the boundary, the mean curvature function is constant $f_{\tilde{C}_0}=\left(2{\tilde{C}_0}^{3/2}\right)/3$ and $\grad f_{\tilde{C}_0}=0$; at any point of $S_{\tilde{C}_0}$, $f$ is positive and $\grad f$ is different from zero.

In \cite{N16,NO17} we also proved the following two properties that will be useful latter on.

\begin{proposition}[\cite{N16}]\label{prop:along-curve}
Let $S_{\tilde{C}_0}$ and $S_{\tilde{C}^\prime_0}$. Assume that we can glue them along a curve at the level of $C^\infty$ smoothness. Then $S_{\tilde{C}_0}$ and $S_{\tilde{C}^\prime_0}$ coincide or one of them is the symmetric of the another with respect to the plane where the common boundary lies.
\end{proposition}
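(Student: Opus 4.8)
The plan is to show first that the hypothesis forces $\tilde{C}_0=\tilde{C}^\prime_0$, and then to go through the very short list of rigid motions of $\mathbb{R}^3$ that can bring two congruent surfaces of this kind together along their common boundary circle.

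I would begin by making the hypothesis precise: gluing $S_{\tilde{C}_0}$ and $S_{\tilde{C}^\prime_0}$ along a curve $\gamma$ at the level of $C^\infty$ smoothness means that, after applying suitable isometries of $\mathbb{R}^3$, the two surfaces have $\gamma$ as a common boundary component and the union $\overline{S}_{\tilde{C}_0}\cup\overline{S}_{\tilde{C}^\prime_0}$ is a $C^\infty$ surface. Such a union is in particular of class $C^2$, so its mean curvature function $f$ is continuous; it restricts on $S_{\tilde{C}_0}$ to the mean curvature of $S_{\tilde{C}_0}$, which extends to $\gamma$ by the constant $f_{\tilde{C}_0}=2\tilde{C}_0^{3/2}/3$, and likewise on $S_{\tilde{C}^\prime_0}$ by the constant $f_{\tilde{C}^\prime_0}=2(\tilde{C}^\prime_0)^{3/2}/3$. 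Hence $f_{\tilde{C}_0}=f_{\tilde{C}^\prime_0}$, and since $t\mapsto 2t^{3/2}/3$ is strictly increasing, $\tilde{C}_0=\tilde{C}^\prime_0$. Thus $S_{\tilde{C}^\prime_0}$ is congruent to $S_{\tilde{C}_0}$, and $\gamma$ is a round circle of radius $r:=\tilde{C}_0^{-3/2}$ lying in a plane $\Pi$ that is orthogonal to the axis of revolution of each surface, with the tangent plane of each surface along $\gamma$ orthogonal to $\Pi$.

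Next I would normalize coordinates so that $\Pi=\{z=0\}$ and $\gamma$ is the circle of radius $r$ centered at the origin, and, after a reflection in $\Pi$ if necessary, so that $S_{\tilde{C}_0}$ is the standard surface of Theorem \ref{th3.3}, contained in $\{z\geq 0\}$ with axis $Oz$. By hypothesis $S_{\tilde{C}^\prime_0}$ is the image of this standard surface under an isometry $T$ of $\mathbb{R}^3$ with $T(\gamma)=\gamma$, and the crux is to classify such $T$ modulo the isometry group of the standard surface. An isometry fixing the circle $\gamma$ setwise fixes its center, hence has trivial translation part; its linear part preserves the affine hull $\Pi$ of $\gamma$ and the line $Oz$ orthogonal to $\Pi$ through the origin, so it has block form $\operatorname{diag}(B,\varepsilon)$ with $B\in O(2)$ acting on the $xy$-plane and $\varepsilon\in\{-1,+1\}$. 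Since the standard surface is a surface of revolution about $Oz$, it is invariant under every $\operatorname{diag}(B,1)$, so $T(S_{\tilde{C}_0})$ depends only on $\varepsilon$: for $\varepsilon=+1$ it equals $S_{\tilde{C}_0}$, so the surfaces coincide; for $\varepsilon=-1$ it equals the mirror image of $S_{\tilde{C}_0}$ with respect to the plane $\Pi$ in which $\gamma$ lies. These are precisely the two alternatives in the statement; that the second one actually yields a $C^\infty$ surface is the known doubling construction, but it is not needed here.

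The step needing the most care is the claim, used above, that $\gamma$ is genuinely a boundary component of each surface, i.e. that $\grad f=0$ along $\gamma$; if the working notion of gluing does not already build this in, it must be argued. Were $\grad f$ nonzero at some point of $\gamma$, a neighborhood of that point in the glued surface would be a $C^\infty$ biconservative surface with $\grad f$ nowhere vanishing, hence by \cite{HV95,CMOP14} a piece of some $S_{\tilde{C}^{\prime\prime}_0}$; since the curvature of its profile solves the analytic ODE $\kappa^{\prime\prime}\kappa=\frac{7}{4}(\kappa^\prime)^2-4\kappa^4$ wherever it does not vanish, the two profiles meeting along $\gamma$ would continue one another as a single solution, and the absence of self-congruences of that profile curve would force the gluing isometry to be the identity, so $S_{\tilde{C}_0}$ and $S_{\tilde{C}^\prime_0}$ would overlap on an open set, contrary to their being glued merely along a curve. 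Hence $\grad f=0$ on $\gamma$, and the argument above applies.
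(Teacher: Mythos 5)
This proposition is quoted from \cite{N16} and the present paper contains no proof of it, so there is nothing in-text to compare against; judged on its own terms, your argument is correct, and its two-step structure (first force $\tilde{C}_0=\tilde{C}_0^\prime$, then classify the rigid motions carrying one copy onto the other along the common boundary circle) is a clean route to the dichotomy. The source runs the argument at the level of the profile curve and its ODE (a uniqueness-of-continuation statement at the point where the curve meets the axis-orthogonal plane); your version replaces that by continuity of the mean curvature across the seam together with the recorded fact that $f\equiv 2\tilde{C}_0^{3/2}/3$ on the boundary circle, which only needs $C^2$ regularity of the glued surface and avoids the ODE entirely. Three points to tighten. (i) The isometry $T$ is a priori only constrained by matching along the arc $\gamma$, so you should say explicitly that the boundary of $T(S_{\tilde{C}_0})$ is a circle of radius $\tilde{C}_0^{-3/2}$ sharing the arc $\gamma$ with the standard boundary circle of the same radius, hence equal to it (two distinct circles meet in at most two points); only then does $T$ preserve the full circle setwise and acquire the block form $\operatorname{diag}(B,\varepsilon)$. (ii) The mean curvature of the glued surface is defined only up to the choice of unit normal, so the continuity argument should be phrased for $f^2$ (or for a fixed global normal), after which positivity of both constants gives $f_{\tilde{C}_0}=f_{\tilde{C}_0^\prime}$. (iii) In your last paragraph the sub-case $(\grad f)(p)\neq 0$ for some $p\in\gamma$ does not really end in a contradiction: it forces the two surfaces to overlap on an open set and hence, by Theorem~\ref{th:uniqueness-R3} and connectedness, to coincide, which is simply the first alternative of the conclusion; calling it a contradiction presupposes a definition of ``gluing along a curve'' in which the intersection is exactly the curve, so either adopt that definition explicitly or fold this sub-case into the ``coincide'' branch.
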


As a consequence of the above result we have

\begin{theorem}[\cite{NO17}]\label{th:uniqueness-R3}
If $\varphi:M^2\to\mathbb{R}^3$ is a biconservative surface with $\grad f\neq 0$ at any point, then there exists a unique $\tilde{C}_0$ such that $\varphi(M)\subset S_{\tilde{C}_0}$.
\end{theorem}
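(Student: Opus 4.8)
The plan is to promote the local statement of Theorem~\ref{th3.3} to a global one by a connectedness argument run along a chain of charts. Theorem~\ref{th3.3} already gives, around every $p\in M$, a neighbourhood $U$, a constant $c>0$ and an isometry $T$ of $\mathbb{R}^{3}$ with $T(\varphi(U))$ an open subset of $S_{c}$; what must be shown is that one and the same $c$, and one and the same isometry, serve for all of $M$. Since the surfaces $S_{\tilde{C}_0}$ with different parameters are pairwise non-locally-isometric (as recalled just above), the coincidence of the constants $c$ on charts that overlap in a two-dimensional set will come for free; the real content is the compatibility of the local isometries, which I would deduce from a rigidity property of the model surfaces.

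Thus the first thing I would establish is the following \emph{rigidity lemma}: if $R$ is an isometry of $\mathbb{R}^{3}$ and $R(S_{\tilde{C}_0})\cap S_{\tilde{C}_0}$ contains a non-empty subset that is open in $S_{\tilde{C}_0}$, then $R(S_{\tilde{C}_0})=S_{\tilde{C}_0}$. To prove it I would use that $S_{\tilde{C}_0}$ is a surface of revolution on which $\grad f$ never vanishes: by rotational invariance $f$ is a function of $\rho$ alone, and $\grad f\neq0$ then makes it a strictly monotone function of $\rho$, so the level sets of $f$ on $S_{\tilde{C}_0}$ are exactly its parallels; likewise on $R(S_{\tilde{C}_0})$. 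Hence on the common open piece $\Sigma$ the arcs of parallels of $S_{\tilde{C}_0}$ and of $R(S_{\tilde{C}_0})$ coincide, being level sets of one and the same function; they therefore determine the same circles, so the axis of $S_{\tilde{C}_0}$ and the axis of $R(S_{\tilde{C}_0})$ are the same line $\ell$, and since $R$ carries the former to the latter we get $R(\ell)=\ell$. Choosing coordinates with $\ell$ the $z$-axis, $R$ has the form $(x,y,z)\mapsto(O(x,y),\pm z+a)$ with $O\in O(2)$ and $a\in\mathbb{R}$; writing $S_{\tilde{C}_0}$ as the graph $z=u_{\tilde{C}_0}(r)$ over $\{r>\tilde{C}_0^{-3/2}\}$ and using that $u_{\tilde{C}_0}$ is nonconstant, the requirement that $R(S_{\tilde{C}_0})$ and $S_{\tilde{C}_0}$ agree on an open set forces the $+$ sign and $a=0$, that is, $R(S_{\tilde{C}_0})=S_{\tilde{C}_0}$.

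Granting the lemma, I would fix $p_{0}\in M$ and, by Theorem~\ref{th3.3}, choose a neighbourhood $U_{0}\ni p_{0}$, a constant $\tilde{C}_0>0$ and an isometry $T_{0}$ with $T_{0}(\varphi(U_{0}))\subset S_{\tilde{C}_0}$. For an arbitrary $p\in M$ I would join $p_{0}$ to $p$ by a path and cover it by open sets $U_{0},U_{1},\dots,U_{k}$ with $p\in U_{k}$ and each $U_{i+1}$ meeting $\Omega_{i}:=U_{0}\cup\dots\cup U_{i}$ in a non-empty open set, and then prove by induction on $i$ that $T_{0}(\varphi(\Omega_{i}))\subset S_{\tilde{C}_0}$. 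For the inductive step, Theorem~\ref{th3.3} supplies $a>0$ and an isometry $T$ with $T(\varphi(U_{i+1}))\subset S_{a}$; on a sufficiently small open subset $W$ of the non-empty overlap $U_{i+1}\cap\Omega_{i}$, on which $\varphi$ is an embedding, the maps $T_{0}\circ\varphi$ and $T\circ\varphi$ take values in $S_{\tilde{C}_0}$ and in $S_{a}$ respectively, so $R:=T_{0}\circ T^{-1}$ carries an open subset of $S_{a}$ onto an open subset of $S_{\tilde{C}_0}$. Consequently $S_{a}$ and $S_{\tilde{C}_0}$ are locally isometric, so $a=\tilde{C}_0$; then the rigidity lemma gives $R(S_{\tilde{C}_0})=S_{\tilde{C}_0}$, hence $T_{0}(\varphi(U_{i+1}))=R(T(\varphi(U_{i+1})))\subset S_{\tilde{C}_0}$, which with the inductive hypothesis yields $T_{0}(\varphi(\Omega_{i+1}))\subset S_{\tilde{C}_0}$. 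Letting $i$ reach $k$ and $p$ range over $M$, this gives $T_{0}(\varphi(M))\subset S_{\tilde{C}_0}$, i.e.\ $\varphi(M)$ lies in the congruent copy $T_{0}^{-1}(S_{\tilde{C}_0})$ of $S_{\tilde{C}_0}$. Uniqueness of $\tilde{C}_0$ is then immediate: were $\varphi(M)$ also contained in a copy of $S_{\tilde{C}'_0}$, then, $\varphi$ being an immersion, some open piece of $S_{\tilde{C}_0}$ would sit inside $S_{\tilde{C}'_0}$, forcing $\tilde{C}_0=\tilde{C}'_0$.

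The bookkeeping with the chain of charts is routine. The step I expect to require the most care is the rigidity lemma — in particular the observation that on $S_{\tilde{C}_0}$ the mean curvature $f$ (equivalently the Gaussian curvature $K=-3f^{2}/4$, whose gradient is nowhere zero) has the parallels as its level sets and thus pins down the axis of revolution; once that is granted, the rest of the lemma and of the proof is elementary.
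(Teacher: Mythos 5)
Your argument is correct. A point of comparison first: the paper does not actually prove Theorem~\ref{th:uniqueness-R3} here --- it is imported from \cite{NO17} and presented as ``a consequence of'' Proposition~\ref{prop:along-curve}, the statement that two model surfaces glued smoothly along a curve must coincide (or be mirror images). Your proof is a self-contained reconstruction along the expected lines: the local classification of Theorem~\ref{th3.3}, a chain-of-charts continuation argument, and a rigidity lemma for the models. Your rigidity lemma plays exactly the role of Proposition~\ref{prop:along-curve}, but in the easier regime where the two copies overlap on a \emph{two-dimensional open} set rather than merely along a curve; there the level sets of the mean curvature immediately pin down the axis of revolution, as you observe, and the rest is a one-line computation with the profile graph. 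The other ingredient you need --- that $S_{\tilde{C}_0}$ and $S_{\tilde{C}_0'}$ are not locally isometric for $\tilde{C}_0\neq\tilde{C}_0'$ --- is explicitly recorded in the paper right after Theorem~\ref{th3.3}, so you may legitimately quote it.

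Two small points worth making explicit if you write this up. First, the openness of $T(\varphi(W))$ in $S_a$ and of $T_0(\varphi(W))$ in $S_{\tilde{C}_0}$ (needed both to apply the non-local-isometry fact and to feed the rigidity lemma) is an invariance-of-domain statement: an embedded $2$-dimensional piece of surface contained in another embedded surface is open in it. Second, as literally stated the theorem asserts $\varphi(M)\subset S_{\tilde{C}_0}$ for the \emph{specific} surface of revolution about the $z$-axis, whereas any proof (yours included) can only produce a congruent copy $T_0^{-1}\left(S_{\tilde{C}_0}\right)$; the statement is to be read up to rigid motions of $\mathbb{R}^3$, and your conclusion is the correct one. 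Neither point is a gap in your argument.
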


Now we can state a \textit{global extrinsic} result.

\begin{theorem}[\cite{MOR16,N16}]\label{th-completeR3-1}
If we consider the symmetry of $\Graf u_{\tilde{C}_0}$, with respect to the $O\rho(=Ox)$ axis, we get a smooth complete biconservative surface $\tilde{S}_{\tilde{C}_0}$ in $\mathbb{R}^3$. Moreover, its mean curvature function $\tilde{f}_{\tilde{C}_0}$ is positive and $\grad \tilde{f}_{\tilde{C}_0}$ is different from zero at any point of an open dense subset of $\tilde{S}_{\tilde{C}_0}$.
\end{theorem}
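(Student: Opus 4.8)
The plan is to reduce all three assertions (smoothness, completeness, biconservativity) to statements about the profile curve and then read off the mean curvature properties from Section~\ref{ref-sec-preliminaries-placeholder}; so everything hinges on understanding $\Graf u_{\tilde{C}_0}$ near the boundary circle.

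\textbf{Smoothness.} The key move is a change of variable that resolves the square-root singularity of $\Graf u_{\tilde{C}_0}$ at the boundary. From the explicit formula one computes $u_{\tilde{C}_0}'(\rho)=\left(\tilde{C}_0\rho^{2/3}-1\right)^{-1/2}$, so the slope of the profile curve tends to $+\infty$ as $\rho\searrow\tilde{C}_0^{-3/2}$. Setting $\xi=\sqrt{\tilde{C}_0\rho^{2/3}-1}\in[0,\infty)$, i.e.\ $\rho=\tilde{C}_0^{-3/2}\left(1+\xi^2\right)^{3/2}$, and writing $z=u_{\tilde{C}_0}(\rho)$, one finds
$$
\frac{dz}{d\xi}=u_{\tilde{C}_0}'(\rho)\,\frac{d\rho}{d\xi}=\frac{3}{\tilde{C}_0^{3/2}}\sqrt{1+\xi^2}>0 .
$$
Hence $z=z(\xi)$ is a real-analytic odd function with $z'(0)\neq0$; inverting it near $0$ shows that, near $z=0$, the profile curve is the graph $\rho=R(z):=\tilde{C}_0^{-3/2}\left(1+\xi(z)^2\right)^{3/2}$ of a real-analytic \emph{even} function with $R(0)=\tilde{C}_0^{-3/2}>0$. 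Away from $z=0$, $u_{\tilde{C}_0}$ is a strictly increasing diffeomorphism onto $(0,\infty)$, so $R$ extends to a smooth even function $R\colon\mathbb{R}\to[\tilde{C}_0^{-3/2},\infty)$ with $R(z)\to\infty$ as $|z|\to\infty$. The symmetry of $\Graf u_{\tilde{C}_0}$ with respect to the $O\rho$-axis is precisely the map $z\mapsto-z$, so the union of $\Graf u_{\tilde{C}_0}$ with its reflection is $\{(R(z),z):z\in\mathbb{R}\}$, a connected regular real-analytic curve disjoint from the $Oz$-axis. Revolving it about $Oz$ yields
$$
\tilde{S}_{\tilde{C}_0}=\left\{\left(R(z)\cos v,R(z)\sin v,z\right):z\in\mathbb{R},\ v\in\mathbb{R}\right\},
$$
an embedded surface, which is smooth (indeed real-analytic) because $R>0$ everywhere.

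\textbf{Completeness.} Since $R(z)\to\infty$ as $|z|\to\infty$, the parametrization above is proper, so $\tilde{S}_{\tilde{C}_0}$ is a closed subset of $\mathbb{R}^3$; a closed regular surface in the complete space $\mathbb{R}^3$ is complete with the induced metric (equivalently, the arclength parameter of the profile curve ranges over all of $\mathbb{R}$).

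\textbf{Biconservativity and mean curvature.} Let $C_{\tilde{C}_0}\subset\tilde{S}_{\tilde{C}_0}$ be the waist circle $z=0$, $\rho=\tilde{C}_0^{-3/2}$; its complement is open and dense. On $\tilde{S}_{\tilde{C}_0}\setminus C_{\tilde{C}_0}$ the surface is the disjoint union of $S_{\tilde{C}_0}$ (the part $z>0$) and its mirror image in the plane $z=0$ (the part $z<0$), both biconservative — $S_{\tilde{C}_0}$ by Theorem~\ref{th3.3}, and its mirror image since \eqref{eq:bicons} is invariant under isometries of $\mathbb{R}^3$. As $\tilde{S}_{\tilde{C}_0}$ is smooth, $A$, $\tilde{f}_{\tilde{C}_0}$ and $\grad\tilde{f}_{\tilde{C}_0}$ are smooth on it, so the vector field $A(\grad\tilde{f}_{\tilde{C}_0})+\tfrac{\tilde{f}_{\tilde{C}_0}}{2}\grad\tilde{f}_{\tilde{C}_0}$ is smooth; it vanishes on the dense set $\tilde{S}_{\tilde{C}_0}\setminus C_{\tilde{C}_0}$, hence everywhere, and $\tilde{S}_{\tilde{C}_0}$ is biconservative. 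Finally $\tilde{f}_{\tilde{C}_0}$ depends only on $z$ and, by the reflection symmetry, is even in $z$, so $\left(\grad\tilde{f}_{\tilde{C}_0}\right)\big|_{C_{\tilde{C}_0}}=0$; on $S_{\tilde{C}_0}$ and its mirror $\tilde{f}_{\tilde{C}_0}>0$ and $\grad\tilde{f}_{\tilde{C}_0}\neq0$ by the properties recalled in Section~\ref{ref-sec-preliminaries-placeholder}, and on $C_{\tilde{C}_0}$ one gets $\tilde{f}_{\tilde{C}_0}=2\tilde{C}_0^{3/2}/3>0$ by continuity. Thus $\tilde{f}_{\tilde{C}_0}>0$ everywhere and $\grad\tilde{f}_{\tilde{C}_0}\neq0$ on the open dense set $\tilde{S}_{\tilde{C}_0}\setminus C_{\tilde{C}_0}$.

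The only genuinely delicate step is the $C^\infty$-gluing across the waist circle: the rest follows from Section~\ref{ref-sec-preliminaries-placeholder} together with the isometry-invariance of \eqref{eq:bicons} and the continuity/density argument. The substitution $\xi=\sqrt{\tilde{C}_0\rho^{2/3}-1}$ is what makes the smoothness transparent, since it simultaneously kills the singularity of $u_{\tilde{C}_0}'$ at $\rho=\tilde{C}_0^{-3/2}$ and exhibits the evenness in $z$ that the reflection requires.
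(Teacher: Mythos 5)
Your argument is correct. The paper itself states Theorem \ref{th-completeR3-1} without proof, citing \cite{MOR16,N16}, so there is no in-paper proof to compare against; but your substitution $\xi=\sqrt{\tilde{C}_0\rho^{2/3}-1}$ is exactly $\xi=\sinh u$ in disguise, and it reproduces the global real-analytic parametrization $\varphi_{C_0}$ of Theorem \ref{main_th1} (with $\sigma^1_{C_0}\sim\cosh^3 u$ and $\sigma^2_{C_0}\sim\tfrac12\sinh(2u)+u$), which is precisely how the cited references establish smoothness and completeness. Your remaining steps --- biconservativity by continuity of $A(\grad \tilde f_{\tilde{C}_0})+\tfrac{\tilde f_{\tilde{C}_0}}{2}\grad \tilde f_{\tilde{C}_0}$ from the dense set $\tilde S_{\tilde{C}_0}\setminus C_{\tilde{C}_0}$, and the evaluation of $\tilde f_{\tilde{C}_0}$ and $\grad\tilde f_{\tilde{C}_0}$ on the waist circle --- match the facts recalled in the paper's preliminaries about the boundary of $S_{\tilde{C}_0}$, so the proof is essentially the same as the intended one.
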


\begin{remark}\label{rk:repraram-curve}
The profile curve $\sigma_{\tilde{C}_0}=\left(\rho,0,u_{\tilde{C}_0}(\rho)\right)\equiv \left(\rho,u_{\tilde{C}_0}(\rho)\right)$ can be re\-para\-metrized as
\begin{equation}\label{eq:sigma_tc}
\begin{array}{rl}
\sigma_{\tilde{C}_0}(\theta)= & \left(\sigma_{\tilde{C}_0}^1(\theta), \sigma_{\tilde{C}_0}^2(\theta)\right) \\\\
  = & {\tilde{C}_0}^{-3/2}\left((\theta+1)^{3/2},\frac{3}{2}\left(\sqrt{\theta^2+\theta} + \log \left(\sqrt{\theta}+\sqrt{\theta+1}\right)\right)\right), \qquad \theta>0,
\end{array}
\end{equation}
and then $X_{\tilde{C}_0}=X_{\tilde{C}_0}(\theta,v)$.
\end{remark}

\begin{remark}
The ``boundary'' of $S_{\tilde{C}_0}$ coincide with the boundary of $S_{\tilde{C}_0}$ as a subset of $\tilde{S}_{\tilde{C}_0}$, i.e., the intersection between the closure of $S_{\tilde{C}_0}$ in $\tilde{S}_{\tilde{C}_0}$ and the closure of $\tilde{S}_{\tilde{C}_0}\setminus S_{\tilde{C}_0}$ in $\tilde{S}_{\tilde{C}_0}$.
\end{remark}

\begin{remark}
The profile curves of the surfaces $S_{\tilde{C}_0}$ and $\tilde{S}_{\tilde{C}_0}$ are given in Figure \ref{FiguraProfileCurveR3}, for $\tilde{C}_0=9^{1/3}$.

\begin{figure}
\begin{tikzpicture}[xscale=1,yscale=1]

\node[inner sep=0pt]  at (2.2,1.2){\includegraphics[width=.3\textwidth]{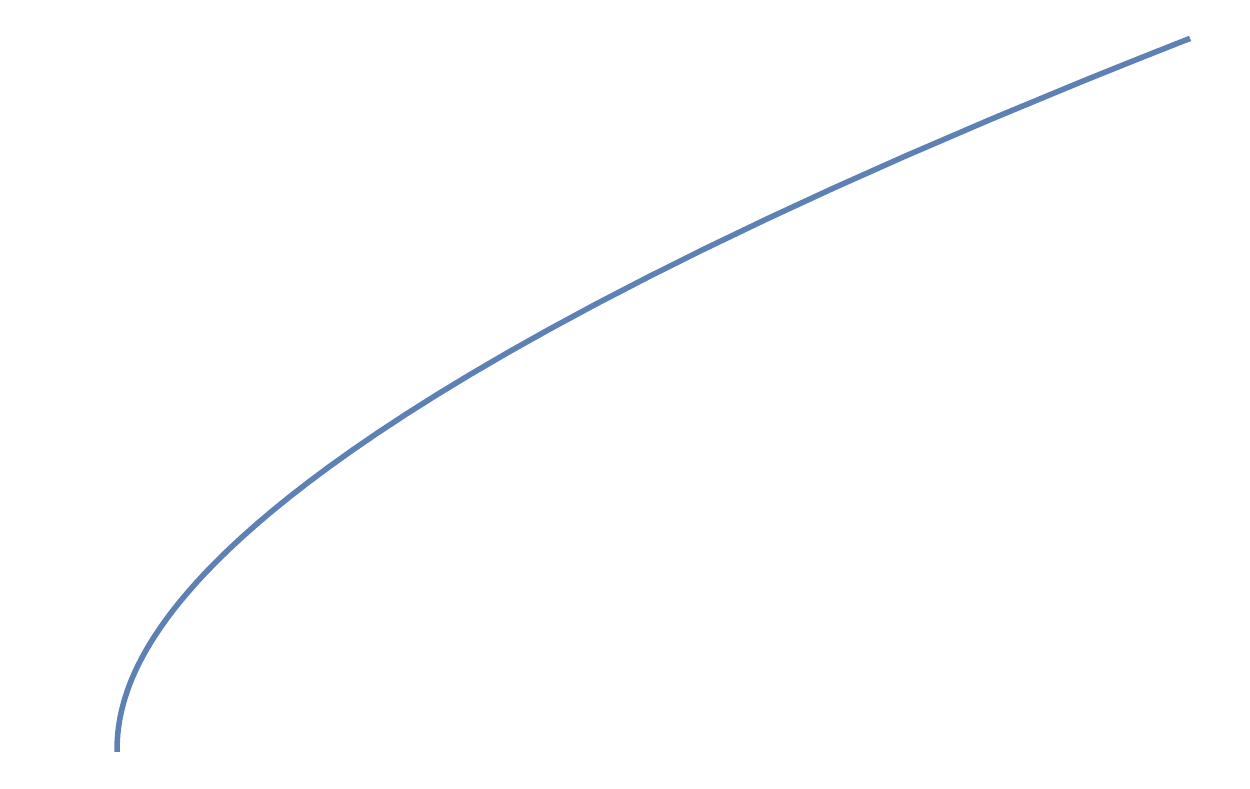}};
\draw [->,>=triangle 45] (-1,0) -- (5,0);
\draw [->,>=triangle 45] (0,-2) -- (0,4);
\draw (5,0) node[below] {$x$};
\draw (0,4) node[left] {$z$};
\end{tikzpicture}

\begin{tikzpicture}[xscale=1,yscale=1]

\node[inner sep=0pt]  at (2.2,0){\includegraphics[width=.3\textwidth]{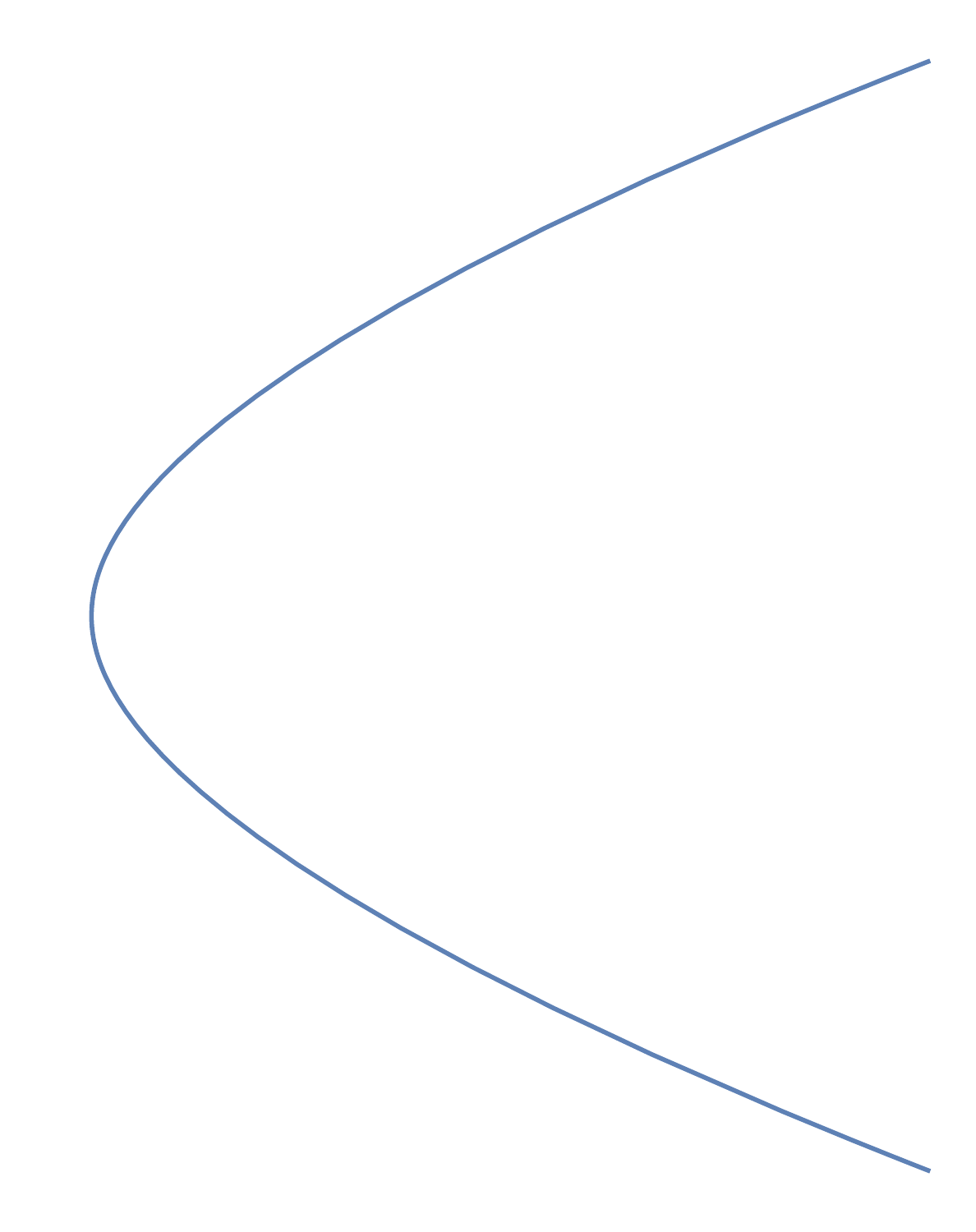}};
\draw [->,>=triangle 45] (-1,0) -- (5,0);
\draw [->,>=triangle 45] (0,-2) -- (0,4);
\draw (5,0) node[below] {$x$};
\draw (0,4) node[left] {$z$};
\end{tikzpicture}
\caption{The profile curves of $S_{\tilde{C}_0}$ and $\tilde{S}_{\tilde{C}_0}$} \label{FiguraProfileCurveR3}
\end{figure}

\end{remark}

Now, we change the point of view and construct, from the intrinsic point of view, complete biconservative surfaces in $\mathbb{R}^3$ with $\grad f\neq 0$ on an open dense subset.

First, we recall the \textit{local intrinsic} characterization for biconservative surfaces in $N^3(c)$.

\begin{theorem}[\cite{FNO16}]\label{thm:char}
Let $(M^2,g)$ be an abstract surface and $c\in\mathbb{R}$ a constant. Then $M$ can be locally isometrically embedded in a space form $N^3(c)$ as a biconservative surface with the gradient of the mean curvature function different from zero at any point of $M$ if and only if the Gaussian curvature $K$ satisfies $c-K(p)>0$, $(\grad K)(p)\neq 0$, for any $p\in M$, and its level curves are circles in $M$ with constant curvature
$$
\kappa=\frac{3|\grad K|}{8(c-K)}.
$$
\end{theorem}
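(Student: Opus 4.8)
The plan rests on the observation, implicit in \eqref{eq:bicons}, that biconservativity forces $\grad f$ to be a principal direction with eigenvalue $-f/2$; since $\trace A=f$, the two principal curvatures are then necessarily $\lambda_1=-f/2$ and $\lambda_2=3f/2$, and everything else follows from the Gauss and Codazzi equations together with \eqref{f-bicons}. \emph{Necessity.} Let $\varphi\colon M^2\to N^3(c)$ be a biconservative isometric embedding with $\grad f$ nowhere zero; by Theorem \ref{th:f-n3c}, $f>0$. Put $X_1=\grad f/|\grad f|$ and take a unit $X_2\perp X_1$, so $AX_1=-(f/2)X_1$, $AX_2=(3f/2)X_2$. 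In dimension two the Gauss equation reads $K=c+\lambda_1\lambda_2=c-3f^2/4$, whence $c-K=3f^2/4>0$ and $f=(2/\sqrt 3)(c-K)^{1/2}$; differentiating, $\grad f=-(1/(\sqrt 3\,(c-K)^{1/2}))\grad K$, so $\grad K$ is nowhere zero as well and the level curves of $f$ and of $K$ coincide. Writing $\nabla_{X_1}X_1=\alpha X_2$, $\nabla_{X_2}X_1=\beta X_2$ and using $X_2f=\langle\grad f,X_2\rangle=0$, the Codazzi equation $(\nabla_{X_1}A)X_2=(\nabla_{X_2}A)X_1$ splits into $\alpha(\lambda_1-\lambda_2)=X_2\lambda_1=0$ and $X_1\lambda_2=\beta(\lambda_1-\lambda_2)$; since $\lambda_1-\lambda_2=-2f\neq0$ these give $\alpha=0$ and $\beta=-3|\grad f|/(4f)$. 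Thus the $X_1$-curves are geodesics, and from $[X_1,X_2]=-\beta X_2$ and $X_2f=0$ one gets $X_2|\grad f|=X_2(X_1f)=X_1(X_2f)-[X_1,X_2]f=0$, so $|\grad f|$ is constant along each level curve. Hence every level curve of $K$ has constant geodesic curvature $|\beta|=3|\grad f|/(4f)$, and inserting $|\grad K|=(3/2)f|\grad f|$ and $c-K=3f^2/4$ turns this into $|\beta|=3|\grad K|/(8(c-K))$, the asserted value of $\kappa$.

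\emph{Sufficiency.} Conversely, assume $c-K>0$, $\grad K\neq0$, and that the level curves of $K$ have constant geodesic curvature $\kappa=3|\grad K|/(8(c-K))$. Near a point $p$, these level curves foliate a neighbourhood; taking $u$ to be arc length along the orthogonal trajectories and $v$ a parameter along the leaves yields $g=du^2+\phi(u,v)^2\,dv^2$ with $K=K(u)$. The geodesic curvature of a leaf equals $|\phi_u/\phi|$, which by hypothesis equals $\kappa=3|K'(u)|/(8(c-K(u)))$, a nowhere-zero function of $u$ alone; hence $\phi_u/\phi$ depends only on $u$, so $\phi$ separates and, after rescaling $v$, the metric becomes $g=du^2+\phi(u)^2\,dv^2$. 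In these coordinates the curves $v=\mathrm{const}$ are geodesics (matching $\alpha=0$ above) and $K=-\phi''/\phi$. Now set $f:=(2/\sqrt3)(c-K)^{1/2}>0$, $e_2:=\phi^{-1}\partial_v$, and let $A$ be the symmetric $(1,1)$-tensor with $A\partial_u=-(f/2)\partial_u$, $Ae_2=(3f/2)e_2$. The Gauss equation $K=c+\det A$ holds by the choice of $f$, and computing $(\nabla_{\partial_u}A)e_2-(\nabla_{e_2}A)\partial_u$ shows that Codazzi reduces to $X_2f=0$ (automatic, since $f=f(u)$) together with the signed identity $\phi'/\phi=3K'/(8(c-K))$. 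By the fundamental theorem of submanifolds in space forms there is then a local isometric immersion $M\to N^3(c)$ with first fundamental form $g$ and shape operator $A$, an embedding after shrinking the neighbourhood; its mean curvature is $\trace A=f$, and $A(\grad f)=-(f/2)\grad f$ by construction, so the image is biconservative, with $\grad f\neq0$ since $\grad K\neq0$.

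\emph{Main obstacle.} All the computations are routine; the one non-routine point is the last identity of the sufficiency argument. The hypothesis only says that the geodesic curvature of the level curves \emph{equals} $3|\grad K|/(8(c-K))$, i.e., it pins down $\phi'/\phi$ up to sign, whereas Codazzi requires the signed relation $\phi'/\phi=3K'/(8(c-K))$ — equivalently, that the level curves curve toward decreasing $K$ (in the necessity part this appeared as $\nabla_{X_2}X_2=(3|\grad f|/(4f))X_1$, pointing along $+\grad f$). This orientation of the ``circles'' has to come from the conventions built into the notion of a circle with prescribed curvature used in the source; it is the one ingredient that does not follow mechanically from the other hypotheses. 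A route that makes this direction transparent is to recognise the warped-product metric $du^2+\phi(u)^2\,dv^2$ directly as the induced metric of one of the known local biconservative surfaces of Theorem \ref{th3.3} (and its analogues in $\mathbb{S}^3$, $\mathbb{H}^3$), which are biconservative embeddings by construction.
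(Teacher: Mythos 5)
This theorem is quoted from \cite{FNO16} and the paper gives no proof of it, so there is nothing in-paper to compare your argument against; I am judging it on its own. Your overall strategy is the standard one: \eqref{eq:bicons} makes $\grad f$ principal with eigenvalue $-f/2$, so the principal curvatures are $-f/2$ and $3f/2$, Gauss gives $c-K=3f^2/4$, and Codazzi does the rest. The necessity half is correct and complete: the frame computation ($\alpha=0$, $\beta=-3|\grad f|/(4f)$), the constancy of $|\grad f|$ along the level curves via $[X_1,X_2]=-\beta X_2$, and the conversion of $3|\grad f|/(4f)$ into $3|\grad K|/(8(c-K))$ all check out. In the sufficiency half there is one small step you should justify: to get $g=du^2+\phi(u,v)^2dv^2$ with $K=K(u)$ you need the orthogonal trajectories of the level curves to be geodesics and the level curves to be parallel; this does follow, because the hypothesis forces $|\grad K|$ (not just $K$) to be constant on each level curve, so $K$ is locally transnormal — but it is a step, not a definition of coordinates.

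The sign issue you flag in the sufficiency direction is a genuine gap, not a convention you can wave at: the unsigned hypotheses do not imply the signed Codazzi relation $\phi'/\phi=3K'/(8(c-K))$. Concretely, take $c=0$ and solve $\phi''=\phi^{11/3}$ near $u=0$ with $\phi(0)=\phi'(0)=1$; then $g=du^2+\phi(u)^2dv^2$ has $K=-\phi''/\phi=-\phi^{8/3}<0=c$, $K'=-\tfrac{8}{3}\phi^{5/3}\phi'\neq0$, and the level curves have constant geodesic curvature $\phi'/\phi=3|K'|/(8(c-K))$, so every hypothesis of the statement (read with unsigned curvature) holds; yet $\phi'/\phi=-3K'/(8(c-K))$, so Codazzi fails for the forced shape operator and, by your own necessity argument (which shows the curvature vector of a level curve must point toward $-\grad K$), this metric admits no local biconservative embedding with $\grad f\neq0$. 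So as written your sufficiency argument does not close: you must either read $\kappa$ as a signed curvature with the orientation your necessity computation identifies (curvature vector opposite to $\grad K$), which is presumably what \cite{FNO16} intends, or supply an additional argument excluding the wrong-sign metrics. Your suggested alternative route — matching the warped product to the explicit family of Theorem \ref{th3.3}, or equivalently to Proposition \ref{prop4.3} — runs into exactly the same dichotomy, since the wrong-sign metrics above are warped products satisfying the unsigned hypotheses that are not isometric to any $g_{C_0}$.
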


Using local isothermal coordinates, we could find some more intrinsic characterizations of biconservative surfaces in $N^3(c)$.  One of them is given by the following result.

\begin{theorem}[\cite{N16}]\label{thm:char}
Let $\left(M^2,g\right)$ be an abstract surface and $c\in\mathbb{R}$ a constant. Then $M$ can be locally isometrically embedded in a space form $N^3(c)$ as a biconservative surface with the gradient of the mean curvature function different from zero at any point of $M$ if and only if the Gaussian curvature $K$ satisfies $c-K(p)>0$, $(\grad K)(p)\neq 0$, for any $p\in M$, and the metric $g$ can be locally written as $g=e^{2\rho}\left(du^2+dv^2\right)$, where $(u,v)$ are local coordinates positively oriented, and $\rho=\rho(u)$ satisfies the equation
\begin{equation}\label{ec-d}
\rho^{\prime\prime}=e^{-2\rho/3}-ce^{2\rho}
\end{equation}
and the condition $\rho^\prime>0$. Moreover, the solutions of the above equation, $u=u(\rho)$, are
$$
u=\int_{\rho_0}^{\rho}\frac{d\tau}{\sqrt{-3e^{-2\tau/3}-ce^{2\tau}+a}}+u_0,
$$
where $\rho$ is in some open interval $I$, $\rho_0\in I$ and $a,u_0\in \mathbb{R}$ are constants.
\end{theorem}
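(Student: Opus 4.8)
The plan is to establish the two implications by different routes: for the ``only if'' part I would work with the first-order invariants of the embedding (the Gauss and Codazzi equations) in line-of-curvature coordinates, and for the ``if'' part I would reduce to the intrinsic characterization of \cite{FNO16} recalled above.

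\emph{The ``only if'' direction.} Let $\varphi\colon M^2\to N^3(c)$ be a biconservative isometric embedding with $\grad f$ nowhere zero; by Theorem~\ref{th:f-n3c} we have $f>0$. By \eqref{eq:bicons}, $\grad f$ is an eigenvector of the shape operator $A$ with eigenvalue $-f/2$, so the principal curvatures are $\lambda_1=-f/2$ and $\lambda_2=3f/2$; since $f>0$ these are everywhere distinct, hence there exist local principal coordinates $(u,v)$, with $g=E\,du^2+G\,dv^2$ and $\partial_u$ along $\grad f$, and $\grad f\parallel\partial_u$ forces $f=f(u)$. Feeding $\lambda_1,\lambda_2$ into the Codazzi equations (which in $N^3(c)$ have the same form as in $\mathbb{R}^3$ in line-of-curvature coordinates), the first one gives $E_v=0$, so reparametrizing $u$ we may assume $E\equiv1$, and the second one reads $(\ln G)_u=-\tfrac32(\ln f)_u$, whence $G=f^{-3/2}\chi(v)$ with $\chi>0$; absorbing $\chi$ into a new $v$ yields $g=du^2+f^{-3/2}\,dv^2$. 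Writing $g=f^{-3/2}\bigl(f^{3/2}\,du^2+dv^2\bigr)$ and setting $\bar u=\int f^{3/4}\,du$ (allowed since $f>0$) puts the metric in the isothermal form $g=e^{2\rho}(d\bar u^2+dv^2)$ with $e^{2\rho}=f^{-3/2}$, i.e.\ $f=e^{-4\rho/3}$, and $\rho=\rho(\bar u)$. Now the Gauss equation $K=c+\lambda_1\lambda_2=c-\tfrac34 f^2$, combined with $K=-e^{-2\rho}\rho''$ (as $\rho$ depends on $\bar u$ only), gives $\rho''=\tfrac34 e^{-2\rho/3}-c\,e^{2\rho}$. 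A homothety $(\bar u,v)\mapsto(\lambda\bar u,\lambda v)$, which replaces $\rho(\bar u)$ by $\rho(\bar u/\lambda)-\ln\lambda$, rescales the coefficient of $e^{-2\rho/3}$ by $\lambda^{-8/3}$ and leaves the $c$-term unchanged, so a suitable choice of $\lambda$ normalizes it to $1$; after this, $\rho''=e^{-2\rho/3}-c\,e^{2\rho}$ and $f=\tfrac{2}{\sqrt3}e^{-4\rho/3}$. Finally $\grad f\neq0$ gives $\rho'\neq0$, and after possibly reversing the orientation of $u$ we obtain $\rho'>0$; the inequalities $c-K=\tfrac34 f^2>0$ and $\grad K=-\tfrac32 f\,\grad f\neq0$ hold automatically.

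\emph{The ``if'' direction and the explicit solutions.} Conversely, starting from a metric of the stated form I would verify the hypotheses of the characterization in \cite{FNO16}. From $\rho=\rho(u)$ and the ODE, $K=-e^{-2\rho}\rho''=c-e^{-8\rho/3}$, hence $c-K=e^{-8\rho/3}>0$; since $K_u=\tfrac83 e^{-8\rho/3}\rho'\neq0$, the gradient $\grad K$ is a nonzero multiple of $\partial_u$; the level curves of $K$ are the lines $u=\cst$, which have constant geodesic curvature $e^{-\rho}\rho'$, and a short computation shows $\tfrac{3|\grad K|}{8(c-K)}=e^{-\rho}\rho'$ as well. Thus all the conditions of \cite{FNO16} are met, so $M$ can be locally isometrically embedded in $N^3(c)$ as a biconservative surface with $\grad f\neq0$. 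For the ``moreover'' part, multiplying $\rho''=e^{-2\rho/3}-c\,e^{2\rho}$ by $\rho'$ and integrating once gives $(\rho')^2=-3e^{-2\rho/3}-c\,e^{2\rho}+a$ for a constant $a$; as $\rho'>0$ this reads $du/d\rho=\bigl(-3e^{-2\rho/3}-c\,e^{2\rho}+a\bigr)^{-1/2}$, and integrating in $\rho$ gives the displayed formula for $u=u(\rho)$.

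\emph{Main obstacle.} The subtle point is the ``only if'' direction, and specifically arriving at the ODE with the coefficient exactly equal to $1$ rather than some unidentified positive constant. This requires carefully tracking constants through the two reparametrizations and the conformal change used to reach the isothermal form, and then observing that, once $\rho$ is forced to depend on a single coordinate, the only remaining freedom in the isothermal coordinates is a homothety of the $(u,v)$-plane, which is precisely what rescales that coefficient to $1$ while leaving $c$ fixed. The rest --- the positivity and non-degeneracy assertions, the verification of the \cite{FNO16} conditions, and the first integral --- is routine.
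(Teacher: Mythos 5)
The paper states this theorem as a recalled result from \cite{N16} and gives no proof of it here, so there is nothing internal to compare against; your argument is correct and follows the standard derivation used in \cite{CMOP14,FNO16,N16} (principal coordinates adapted to $\grad f$, Codazzi giving $g=du^2+f^{-3/2}dv^2$, the conformal change to isothermal form with $e^{2\rho}=f^{-3/2}$, Gauss yielding the ODE, and a homothety normalizing the coefficient to $1$, with the first integral giving the quadrature). Your computations check out, including $f=\tfrac{2}{\sqrt3}e^{-4\rho/3}$, $c-K=e^{-8\rho/3}$, and $(\rho')^2=-3e^{-2\rho/3}-ce^{2\rho}+a$.
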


Then, from the above theorem for $c=0$, one obtains

\begin{proposition}[\cite{N16}]\label{prop4.3}
Let $\left(M^2,g\right)$ be an abstract surface. Then $M$ can be locally isometrically embedded in $\mathbb{R}^3$ as a biconservative surface with $\grad f\neq 0$ at any point of $M$ if and only if the Riemannian metric $g$ can be locally written as
$$
g_{C_0}(u,v)=C_0\left(\cosh u\right)^6(du^2+dv^2), \qquad u>0,
$$
where $C_0\in \mathbb{R}$ is a positive constant.
\end{proposition}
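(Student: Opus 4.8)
The plan is to reduce the statement to the characterization of biconservative surfaces in isothermal coordinates (Theorem~\ref{thm:char}) with $c=0$, after which it remains to integrate the governing ordinary differential equation explicitly and to normalize the coordinates. For the non-trivial implication, suppose $M$ admits such a local isometric biconservative embedding into $\mathbb{R}^3$ with $\grad f$ nowhere zero. By Theorem~\ref{thm:char} with $c=0$, about each point of $M$ there are positively oriented isothermal coordinates $(u,v)$ with $g=e^{2\rho}(du^2+dv^2)$, where $\rho=\rho(u)$, $\rho'>0$ and $\rho''=e^{-2\rho/3}$. First I would pass to a first integral: multiplying by $\rho'$ and integrating once gives $(\rho')^2=a-3e^{-2\rho/3}$ for some real constant $a$, and since the left-hand side is positive while $e^{-2\rho/3}>0$, necessarily $a>0$. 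Next I would integrate the explicit primitive $u=u(\rho)$ supplied by the theorem by means of the substitution $t=e^{\rho/3}$: since $d\tau=3\,dt/t$ and $e^{-2\tau/3}=t^{-2}$, the integral collapses to $u-u_0=\int 3\,dt/\sqrt{at^2-3}$, which integrates and inverts to
\[
e^{\rho/3}=\sqrt{\tfrac{3}{a}}\,\cosh\!\left(\tfrac{\sqrt{a}}{3}(u-u_0)\right),\qquad\text{hence}\qquad e^{2\rho}=\left(\tfrac{3}{a}\right)^{3}\cosh^{6}\!\left(\tfrac{\sqrt{a}}{3}(u-u_0)\right).
\]
Finally I would pass to the coordinates $\bar u=\tfrac{\sqrt a}{3}(u-u_0)$, $\bar v=\tfrac{\sqrt a}{3}(v-v_0)$, which are again positively oriented and isothermal (a positive dilation composed with a translation); in them $g=C_0(\cosh\bar u)^6(d\bar u^2+d\bar v^2)$ with $C_0=243/a^4>0$, while $\rho'=\sqrt{a}\,\tanh\bar u>0$ forces $\bar u>0$. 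This is precisely the asserted form of the metric.

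For the converse I would start from $g=g_{C_0}=C_0(\cosh u)^6(du^2+dv^2)$ with $u>0$, $C_0>0$ (after a reflection we may assume $(u,v)$ positively oriented), and check the hypotheses of Theorem~\ref{thm:char} after one rescaling. Set $\lambda=3^{3/8}C_0^{1/8}>0$ and $\tilde u=\lambda u$, $\tilde v=\lambda v$, so that $g=\tilde C_0\bigl(\cosh(\tilde u/\lambda)\bigr)^6(d\tilde u^2+d\tilde v^2)$ with $\tilde C_0=C_0/\lambda^2$. Writing $\tilde\rho=\tfrac12\log\tilde C_0+3\log\cosh(\tilde u/\lambda)$, a function of $\tilde u$ only, one computes $\tilde\rho''=3\lambda^{-2}\cosh^{-2}(\tilde u/\lambda)$ and $e^{-2\tilde\rho/3}=\tilde C_0^{-1/3}\cosh^{-2}(\tilde u/\lambda)$; the constant $\lambda$ was chosen exactly so that $3\lambda^{-2}=\tilde C_0^{-1/3}$, that is, so that $\tilde\rho''=e^{-2\tilde\rho/3}$. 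Since moreover $\tilde\rho'=3\lambda^{-1}\tanh(\tilde u/\lambda)>0$ for $\tilde u>0$, while $K=-e^{-2\tilde\rho}\tilde\rho''=-e^{-8\tilde\rho/3}<0$ (so $c-K>0$ with $c=0$) and $\partial_{\tilde u}K=\tfrac83 e^{-8\tilde\rho/3}\tilde\rho'\neq 0$ (so $\grad K\neq 0$), Theorem~\ref{thm:char} applies and yields the desired local isometric biconservative embedding of $M$ into $\mathbb{R}^3$ with $\grad f\neq 0$.

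I do not expect a real obstacle here. The two points that genuinely need attention are the substitution $t=e^{\rho/3}$, which linearizes the integral and makes the hyperbolic cosine appear, and the bookkeeping verifying that the coordinate rescalings used in both implications keep the metric in the positively oriented isothermal form required by Theorem~\ref{thm:char}; once the first integral $(\rho')^2=a-3e^{-2\rho/3}$ and the sign constraint $a>0$ are secured, everything else is a routine calculation.
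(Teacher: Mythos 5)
Your proposal is correct and follows exactly the route the paper intends: the paper states this proposition as an immediate consequence of Theorem~\ref{thm:char} with $c=0$ (citing \cite{N16}) and gives no further argument, and your computation — the first integral $(\rho')^2=a-3e^{-2\rho/3}$, the substitution $t=e^{\rho/3}$ leading to $e^{\rho/3}=\sqrt{3/a}\,\cosh\bigl(\tfrac{\sqrt a}{3}(u-u_0)\bigr)$, the orientation-preserving rescaling giving $C_0=243/a^4$, and the converse verification via the choice $\lambda=3^{3/8}C_0^{1/8}$ — correctly supplies the omitted details in both directions.
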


Concerning the complete non-$CMC$ biconservative surfaces in $\mathbb{R}^3$, we have the next \textit{global intrinsic} result.

\begin{theorem}[\cite{N16}]\label{main_th1}
Let $\left(\mathbb{R}^2,g_{C_0}=C_0 \left(\cosh u\right)^6\left(du^2+dv^2\right)\right)$ be a surface, where $C_0\in\mathbb{R}$ is a positive constant. Then we have:
\begin{itemize}
\item[(i)] the metric on $\mathbb{R}^2$ is complete;
\item[(ii)] the Gaussian curvature is given by
           $$
            K_{C_0}(u,v)=K_{C_0}(u)=-\frac{3}{C_0\left(\cosh u\right)^8}<0,\quad  K^\prime_{C_0}(u)=\frac{24 \sinh u}{C_0\left(\cosh u\right)^9},
           $$
           and therefore $\grad K_{C_0}\neq 0$ at any point of $\mathbb{R}^2\setminus Ov$;

\item[(iii)] the immersion $\varphi_{C_0}:\left(\mathbb{R}^2,g_{C_0}\right)\to \mathbb{R}^3$ given by
    $$
    \varphi_{C_0}(u,v)=\left(\sigma_{C_0}^1(u)\cos (3v), \sigma_{C_0}^1(u)\sin (3v), \sigma_{C_0}^2(u)\right)
    $$
    is biconservative and it is an embedding, where
    $$
    \sigma_{C_0}^1(u)=\frac{\sqrt{C_0}}{3}\left(\cosh u\right)^3, \quad
    \sigma_{C_0}^2(u)=\frac{\sqrt{C_0}}{2}\left(\frac{1}{2}\sinh (2u)+u\right), \qquad u \in \mathbb{R}.
    $$
\end{itemize}
\end{theorem}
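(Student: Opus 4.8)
The statement splits into three independent assertions, which I would prove in turn; only (iii) requires real work. For (i) I would compare $g_{C_0}$ with the flat metric $g_0=du^2+dv^2$ on $\mathbb R^2$: since $\cosh u\ge 1$ one has $g_{C_0}\ge C_0\,g_0$ pointwise, so $L_{g_{C_0}}(\gamma)\ge\sqrt{C_0}\,L_{g_0}(\gamma)$ for every curve $\gamma$. As $(\mathbb R^2,g_0)$ is complete, every divergent curve has infinite $g_0$-length, hence infinite $g_{C_0}$-length, and completeness of $g_{C_0}$ follows from the standard length criterion. (Equivalently: a conformal rescaling of a complete metric by a factor bounded below by a positive constant stays complete.)

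For (ii), write $g_{C_0}=e^{2\rho}(du^2+dv^2)$ with $\rho(u)=\tfrac12\log C_0+3\log\cosh u$ and apply the conformal curvature formula $K=-e^{-2\rho}(\rho_{uu}+\rho_{vv})=-e^{-2\rho}\rho''$, the last equality because $\rho=\rho(u)$. From $\rho'=3\tanh u$ and $\rho''=3/\cosh^2 u$ one gets $K_{C_0}=-3/(C_0\cosh^8 u)<0$ and $K_{C_0}'=24\sinh u/(C_0\cosh^9 u)$. Since $|\grad K_{C_0}|^2=e^{-2\rho}(K_{C_0}')^2$, the gradient vanishes exactly where $\sinh u=0$, i.e.\ precisely along $Ov=\{u=0\}$, which is what is claimed.

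For (iii), I would first check that $\varphi_{C_0}$ is an isometric immersion: for the surface of revolution $\varphi_{C_0}(u,v)=(\sigma^1_{C_0}(u)\cos 3v,\sigma^1_{C_0}(u)\sin 3v,\sigma^2_{C_0}(u))$ one computes, using $\dot\sigma^1_{C_0}=\sqrt{C_0}\cosh^2 u\sinh u$ and $\dot\sigma^2_{C_0}=\sqrt{C_0}\cosh^2 u$, the first fundamental form $E=(\dot\sigma^1_{C_0})^2+(\dot\sigma^2_{C_0})^2=C_0\cosh^6 u$, $F=0$, $G=9(\sigma^1_{C_0})^2=C_0\cosh^6 u$; hence $\varphi_{C_0}$ pulls the Euclidean metric back to $g_{C_0}$, and it is an immersion because $\cosh u$ never vanishes. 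For biconservativity, note that $\partial_u,\partial_v$ are principal directions of a surface of revolution, with principal curvatures $\kappa_1(u)$ (meridian) and $\kappa_2(u)$ (parallel); since $f=\kappa_1+\kappa_2$ depends on $u$ only, $\grad f$ is proportional to $\partial_u$ and $A(\grad f)=\kappa_1\grad f$, so \eqref{eq:bicons} is equivalent to $\kappa_1=-f/2$, i.e.\ to $\kappa_2=-3\kappa_1$. Computing the second fundamental form against the unit normal $\eta=(\sqrt{C_0}\cosh^3 u)^{-1}(-\dot\sigma^2_{C_0}\cos 3v,-\dot\sigma^2_{C_0}\sin 3v,\dot\sigma^1_{C_0})$ (only the $\partial_u\partial_u$ and $\partial_v\partial_v$ entries survive) gives $\kappa_1=-1/(\sqrt{C_0}\cosh^4 u)$ and $\kappa_2=3/(\sqrt{C_0}\cosh^4 u)$, so $\kappa_2=-3\kappa_1$ holds identically; consequently $f=2/(\sqrt{C_0}\cosh^4 u)>0$, $K=\kappa_1\kappa_2=-3/(C_0\cosh^8 u)$ (matching (ii)), and \eqref{eq:bicons} holds at every point of $\mathbb R^2$, trivially along $Ov$ where $\grad f=0$. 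Alternatively, on $\{u>0\}$ the substitution $\theta=\sinh^2 u$ together with absorbing the factor $3$ into the angular variable identifies $\varphi_{C_0}$ with the biconservative embedding $X_{\tilde C_0}$ of Theorem \ref{th3.3} for $\tilde C_0=3^{2/3}C_0^{-1/3}$ via the reparametrization in Remark \ref{rk:repraram-curve}, so $\varphi_{C_0}$ is biconservative on $\{u>0\}$; on $\{u<0\}$ it follows because $u\mapsto -u$ corresponds to the reflection of $\mathbb R^3$ in the $Oxy$-plane, and $\{u\ne 0\}$ being dense gives it everywhere. Finally, $\varphi_{C_0}$ is a rank-two immersion, and since $\sigma^2_{C_0}$ is strictly increasing ($\dot\sigma^2_{C_0}>0$) the equality $\varphi_{C_0}(u_1,v_1)=\varphi_{C_0}(u_2,v_2)$ forces $u_1=u_2$ and then $3v_1\equiv 3v_2\pmod{2\pi}$; hence $\varphi_{C_0}$ is injective — and an embedding — on every strip $\mathbb R\times[v_0,v_0+2\pi/3)$, and its image is the complete embedded regular surface of revolution $\tilde S_{\tilde C_0}$ of Theorem \ref{th-completeR3-1}.

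The \emph{main obstacle} — more precisely, the only place where something can really go wrong — is the second fundamental form computation for $\varphi_{C_0}$ and the resulting identity $\kappa_2=-3\kappa_1$: this is the analytic content of biconservativity, and the hyperbolic-function bookkeeping there is the most error-prone step. Two observations keep everything else routine: \eqref{eq:bicons} is automatically satisfied wherever $\grad f=0$, so nothing delicate happens along $Ov$ and no limiting argument is needed there; and the completeness in (i) is immediate from the lower bound $\cosh u\ge 1$, with no need to integrate the profile-curve ODE.
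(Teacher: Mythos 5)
Your proposal is correct, and since the paper only quotes this theorem from \cite{N16} without reproducing a proof, your argument serves as a self-contained verification rather than a parallel of anything in the text. I checked the computations: $\dot\sigma^1_{C_0}=\sqrt{C_0}\cosh^2u\,\sinh u$, $\dot\sigma^2_{C_0}=\sqrt{C_0}\cosh^2u$, $E=G=C_0\cosh^6u$, $F=0$; the second fundamental form gives $\kappa_1=-1/\left(\sqrt{C_0}\cosh^4u\right)$, $\kappa_2=3/\left(\sqrt{C_0}\cosh^4u\right)$, hence $\kappa_2=-3\kappa_1$, $f=2/\left(\sqrt{C_0}\cosh^4u\right)>0$ and $K=\kappa_1\kappa_2=-3/\left(C_0\cosh^8u\right)$, consistent with (ii); and the reduction of \eqref{eq:bicons} to $\kappa_1=-f/2$ along the meridian direction, with the equation holding trivially on $Ov$ where $\grad f=0$, is exactly right. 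The completeness and curvature computations in (i) and (ii) are standard and correct as you present them.

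One nuance worth being explicit about: as you in effect observe, $\varphi_{C_0}(u,v+2\pi/3)=\varphi_{C_0}(u,v)$, so $\varphi_{C_0}$ is not injective on all of $\mathbb{R}^2$ and the word ``embedding'' in the statement must be read as asserting that the image $\tilde{S}_{\tilde{C}_0}$ is an embedded regular surface onto which $\varphi_{C_0}$ restricts to an embedding on each strip $\mathbb{R}\times[v_0,v_0+2\pi/3)$. This reading is the one the paper itself adopts later, where $\varphi_{C_0}$ is used as a universal covering projection onto $\tilde{S}_{\tilde{C}_0}$ in the proof of Theorem \ref{th:2connectedcomp}; your identification of the image with $\tilde{S}_{\tilde{C}_0}$ via $\theta=\sinh^2u$ and $\tilde{C}_0=3^{2/3}C_0^{-1/3}$ is also consistent with Remark \ref{rk:repraram-curve}.
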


\begin{remark}
For the above immersion, $\grad f_{C_0}\neq 0$ at any point of $\mathbb{R}^2\setminus Ov$.
\end{remark}

\section{Uniqueness of biconservative surfaces in $\mathbb{R}^3$}\label{subsec:uniqueness}

In Theorem \ref{main_th1}, $\grad f$ is nowhere zero on an open dense subset. Various attempts to construct a biconservative surface in $\mathbb{R}^3$ such that $\grad f=0$ on a subset with non-empty interior and $\grad f\neq 0$ on a non-empty subset failed. This led us to the following conjecture.

\vspace{0.2cm}
\noindent\textbf{Conjecture 1.} \textit{Let $M^2$ be a biconservative surface in $\mathbb{R}^3$. If there exists an non-empty open subset $U$ of $M$ such that $\grad f=0$ on $U$, then $\grad f=0$ on $M$.}
\vspace{0.2cm}

We can also propose a stronger statement.

\vspace{0.2cm}
\noindent\textbf{Conjecture 2.} \textit{The only simply connected complete non-$CMC$ biconservative surfaces in $\mathbb{R}^3$ are those given by Theorem \ref{main_th1}.}
\vspace{0.2cm}

To make things simpler, we could work only with regular surfaces in $\mathbb{R}^3$, i.e. with those surfaces defined by embeddings, and basically we would work only with images instead of maps. In this case, the corresponding statement to the Conjecture 2 is

\vspace{0.2cm}
\noindent\textbf{Conjecture 3.} \textit{The only complete non-$CMC$ biconservative regular surfaces in $\mathbb{R}^3$ are the surfaces $\tilde{S}_{\tilde{C}_0}$, $\tilde{C}_0 > 0$.}
\vspace{0.2cm}

In Theorem \ref{th:contradiction} and Theorem \ref{th:2connectedcomp} we prove Conjecture 1 and, respectively, Conjecture 2, under some some additional hypotheses. Then, in Theorem \ref{th:nonCMCbicons}, we prove Conjecture 3.

Even if the uniqueness of complete non-$CMC$ biconservative surfaces was already mentioned in ~\cite{N16,NO17}, a rigorous approach of this problem has been done only here.

\vspace{0.2cm}

Our first result is the following theorem.

\begin{theorem}\label{th:contradiction}
Let $\varphi:M^2\to\mathbb{R}^3$ be a non-$CMC$ surface. Assume that
$$
W=\left\{p\in M\ |\ (\grad f)(p)\neq 0\right\}
$$
is connected, $M\setminus W$ has non-empty interior and the boundaries in $M$ of $\Int(M\setminus W)$ and $M\setminus W$ coincide, i.e., $\partial^M \Int(M\setminus W)=\partial^M (M\setminus W)$. Then $M$ cannot be biconservative.
\end{theorem}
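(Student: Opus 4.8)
The plan is to derive a contradiction from the structure of $W$ and $M \setminus W$ by exploiting the rigidity of biconservative surfaces on $W$. First I would set $V = \Int(M \setminus W)$ and observe that on $V$ the mean curvature $f$ is locally constant; since $V$ is open and (after passing to connected components) we may assume $f \equiv f_0$ on a component $V_0$, with $f_0 \neq 0$ because the surface is non-$CMC$ and we can rule out minimal pieces glued to non-minimal ones along the way (the mean curvature is continuous on $M$, so $f$ extends continuously to $\partial^M V_0$). On $W$, by Theorem \ref{th:uniqueness-R3} there is a unique $\tilde C_0$ with $\varphi(W) \subset S_{\tilde C_0}$, so $W$ is an open subset of the known biconservative surface $S_{\tilde C_0}$ (or rather of $\tilde S_{\tilde C_0}$), and in particular $f|_W = f_{\tilde C_0}$ is the explicit function from Section 2, which takes the boundary value $f_{\tilde C_0} = 2\tilde C_0^{3/2}/3$ precisely on the ``boundary'' circle of $S_{\tilde C_0}$ where $\grad f_{\tilde C_0} = 0$.

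The key step is the analysis along the common boundary $\Gamma = \partial^M V = \partial^M(M\setminus W)$. By hypothesis this set is also $\partial^M W$ (since $W = M \setminus (M\setminus W)$ is open, $\partial^M W = \partial^M(M\setminus W)$, and we are told this equals $\partial^M V$). Take a point $p \in \Gamma$. On one side, approaching $p$ through $W$, we have $\grad f \to 0$ at $p$ by continuity of $\grad f$ — but $p$ is a limit of points of $W$ where $\grad f \neq 0$, so $p$ lies in the closure of the open dense ``non-CMC'' part of $\tilde S_{\tilde C_0}$, and by the explicit description of $S_{\tilde C_0}$ the only points of $\tilde S_{\tilde C_0}$ where $\grad f = 0$ form the single boundary circle, along which the tangent plane is parallel to $Oz$ and $f = f_{\tilde C_0}$ is constant. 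On the other side, approaching $p$ through $V_0$, we have $f \to f_0$. Hence $f_0 = f_{\tilde C_0}$, and moreover $\Gamma$ must (locally near $p$) be contained in this boundary circle of $\tilde S_{\tilde C_0}$, which is a one-dimensional submanifold. I would then argue that $V_0$ is attached to $W$ along (part of) this rigid circle, with matching tangent planes and matching mean curvature, and use the real-analyticity of the situation — or more directly, a second-order jet comparison using Proposition \ref{cor:eqf} and equation \eqref{eq:bicons} — to show that the CMC piece $V_0$ must itself be forced: a CMC surface in $\mathbb R^3$ tangent along a curve to $\tilde S_{\tilde C_0}$ with the same mean curvature value $f_0 = 2\tilde C_0^{3/2}/3$ is, locally, rigidly determined, and it cannot close up with $W$ while keeping $\Gamma = \partial^M V = \partial^M W$.

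The cleanest route to the contradiction, I expect, is the following. Because $\partial^M V = \partial^M(M\setminus W)$, the set $M \setminus W$ has no ``thin'' part: $\overline{V} = M\setminus W$ in $M$. So $M = W \cup \overline{V}$ with $W$ open, $\overline{V} = \overline{\Int(M\setminus W)}$. Now $W$ is an open subset of $\tilde S_{\tilde C_0}$, on which $\grad f$ is nonzero except on the boundary circle; but $W$ is by definition exactly where $\grad f \neq 0$, so $W$ contains no point of that circle, meaning $W$ is an open subset of $\tilde S_{\tilde C_0} \setminus (\text{circle})$, which has exactly two connected components (the two ``halves'' meeting at the circle). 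Connectedness of $W$ forces $W$ to lie in one half. Then $\Gamma = \partial^M W$ is, on the $W$-side, a subset of the boundary circle together with possibly the ``outer'' ends — but the outer ends of $\tilde S_{\tilde C_0}$ are at infinity and complete, so if $M$ is to be a surface without boundary, $W$ must be glued along $\Gamma$ only along the circle. I would then show this is impossible: the circle has $\grad f = 0$ on it as a limit from $W$, yet points arbitrarily close to it on the $W$-side have $|\grad f|$ bounded below (from the explicit formula for $f_{\tilde C_0}$ near the boundary, $\grad f_{\tilde C_0}$ extends continuously and vanishes to first order only, so $|\grad f| \sim \mathrm{dist}$ — actually it vanishes, so this needs the precise asymptotics), and matching a genuinely CMC piece $V_0$ across the circle while preserving $C^\infty$-smoothness of the immersion contradicts Proposition \ref{prop:along-curve}-type rigidity, since $V_0$ is not one of the $S_{\tilde C_0'}$.

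The main obstacle, and the step I would spend the most care on, is making the ``gluing along the boundary circle is impossible'' argument fully rigorous: one must control the geometry of $W$ near $\Gamma$ using the explicit parametrization $X_{\tilde C_0}$ and the reparametrization \eqref{eq:sigma_tc} (where $\theta \to 0^+$ corresponds to the boundary), verify that the only way $\partial^M W$ can meet $\Gamma$ is along $\theta = 0$, and then show that any $C^\infty$ (indeed $C^2$) extension of the immersion past $\theta = 0$ into a CMC region forces, via \eqref{eq:bicons} and Proposition \ref{cor:eqf}, the CMC value to be $f_0 = 2\tilde C_0^{3/2}/3$ and the second fundamental form to match the jet of $\tilde S_{\tilde C_0}$, whence by uniqueness of the ODE for surfaces of revolution the extension is $\tilde S_{\tilde C_0}$ itself — so $f$ is not constant on any neighborhood, contradicting $p \in \Int(M\setminus W)$. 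In short: the hypothesis $\partial^M \Int(M\setminus W) = \partial^M(M\setminus W)$ is exactly what prevents a ``buffer'' and forces an illegal direct $C^\infty$ juncture between a known non-CMC biconservative piece and a CMC piece, which the local rigidity results forbid.
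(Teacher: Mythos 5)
Your setup is on the right track and overlaps substantially with the paper's: you localize at a point $p_0$ of the common boundary, use Theorem \ref{th:uniqueness-R3} to place $\varphi(W)$ inside a unique $S_{\tilde{C}_0}$, deduce that $p_0$ must land on the boundary circle of $S_{\tilde{C}_0}$ in $\tilde{S}_{\tilde{C}_0}$ (so that $f(p_0)=2\tilde{C}_0^{3/2}/3\neq 0$), and you correctly identify that the hypothesis $\partial^M\Int(M\setminus W)=\partial^M(M\setminus W)$ is what allows $p_0$ to be approached from inside the set where $f$ is locally constant. But the contradiction itself is never actually derived. You defer it to a rigidity/unique-continuation argument (a CMC surface tangent along a curve to $\tilde{S}_{\tilde{C}_0}$ is ``rigidly determined''; ``by uniqueness of the ODE for surfaces of revolution the extension is $\tilde{S}_{\tilde{C}_0}$ itself''), and you yourself flag this as the step needing the most care. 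As stated it does not go through: nothing forces the putative CMC piece to be a surface of revolution, so there is no ODE to invoke; a Cauchy-type uniqueness statement along a curve for the (elliptic) CMC equation is not available off the shelf; and Proposition \ref{prop:along-curve} only compares two members of the family $S_{\tilde{C}_0}$ with each other, not a member with an arbitrary CMC piece. Note also that the theorem does not assume $M$ complete, so the part of your argument about the ``outer ends at infinity'' of $\tilde{S}_{\tilde{C}_0}$ is neither justified nor needed.

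The missing idea is much simpler than any rigidity statement. On $W$ the surface satisfies \eqref{f-bicons} with $c=0$, that is, $f\,\Delta f+|\grad f|^2-f^4=0$. Since $f$, $\grad f$ and $\Delta f$ are continuous on all of $M$, evaluating this identity along a sequence in $W$ converging to $p_0$ gives $f\left(p_0\right)(\Delta f)\left(p_0\right)+\left|(\grad f)\left(p_0\right)\right|^2-f^4\left(p_0\right)=0$. On the other hand, along a sequence in $\Int(M\setminus W)$ converging to $p_0$ (this is precisely where the boundary hypothesis enters) one has $\grad f\equiv 0$ and $\Delta f\equiv 0$, hence $(\grad f)\left(p_0\right)=0$ and $(\Delta f)\left(p_0\right)=0$ by continuity. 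Substituting yields $f^4\left(p_0\right)=0$, contradicting $f\left(p_0\right)=2\tilde{C}_0^{3/2}/3>0$. No classification of the CMC piece, and no jet matching beyond this second-order limit, is required; this is exactly how the paper concludes.
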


\begin{proof}
Assume that $M$ is biconservative. The boundary $\partial^MW$ is non-empty and let us consider an arbitrary point
$$
p_0\in \partial^M W=\partial^M(M\setminus W)=\partial^M\Int(M\setminus W).
$$
As $W$ is an open subset, it follows that $p_0\notin W$.

There exists an open subset $U_0$ of $M$ such that $p_0\in U_0$ and $\varphi_{|_{U_0}}:U_0\to\mathbb{R}^3$ is an embedding. Thus, we can identify $U_0$ with its image $\varphi\left(U_0\right)\subset\mathbb{R}^3$ and then $U_0$ can be seen as a regular surface in $\mathbb{R}^3$.

We note that $p_0\in\partial^M W \cap U_0$ allows the existence of a sequence $\left(p^1_n\right)_{n\in\mathbb{N}^\ast}\subset W\cap U_0$, $p_n^1\neq p_0$, for any $n\in \mathbb{N}^\ast$, which converges to $p_0$, with respect to the intrinsic distance function $d_M$ on $M$, and, similarly, from $p_0\in \partial^M \Int(M\setminus W)\cap U_0$ it follows that there exists a sequence $\left(p^2_n\right)_{n\in\mathbb{N}^\ast}\subset \Int (M\setminus W)\cap U_0$, $p_n^2\neq p_0$, for any $n\in \mathbb{N}^\ast$, which converges to $p_0$, with respect to $d_M$. It is easy to see that we can identify $p^1_n=\varphi\left(p^1_n\right)$ and $p^2_n=\varphi\left(p^2_n\right)$.

Now, since $W$ is connected, open in $M$ and $\grad f\neq 0$ at any point of $W$, from Theorem \ref{th:uniqueness-R3}, one obtains that there exists a unique $\tilde{C}_0$ such that $\varphi(W)$ is an open subset in $S_{\tilde{C}_0}$. Then, as $W\cap U_0$ is open in $W$, it is clear that $\varphi\left(W\cap U_0\right)$ is open in $S_{\tilde{C}_0}$. In fact, using the identification $\varphi\left(W\cap U_0\right)=W\cap U_0$, we have that $W\cap U_0$ is open in $S_{\tilde{C}_0}$. We recall that $S_{\tilde{C}_0}$ is open in the complete surface $\tilde{S}_{\tilde{C}_0}$, and then $W\cap U_0$ is also open in $\tilde{S}_{\tilde{C}_0}$.

Further, we denote by $d_0$ the distance function on $\mathbb{R}^3$ and by $d_{\tilde{S}_{\tilde{C}_0}}$ the intrinsic distance function on $\tilde{S}_{\tilde{C}_0}$.

Obviously, the convergence of the sequence $\left(p^1_n\right)$ to $p_0$ with respect to the distance $d_M$, implies the convergence of $\left(p^1_n\right)$ to $p_0$ with respect to $d_0$.

The sequence $\left(p^1_n\right)$ has been chosen in $W\cap U_0$, so $\left(p^1_n\right)\subset\tilde{S}_{\tilde{C}_0}$. As $\tilde{S}_{\tilde{C}_0}$ is a closed subset in $\mathbb{R}^3$, we obtain that $p_0\in \tilde{S}_{\tilde{C}_0}$ and thus $\left(p^1_n\right)$ converges to $p_0$ also with respect to the distance $d_{\tilde{S}_{\tilde{C}_0}}$.

We have already seen that $W\cap U_0$ is open in $U_0$ and also in $\tilde{S}_{\tilde{C}_0}$. Then, the mean curvature functions $f$ and $\tilde{f}_{\tilde{C}_0}$ corresponding to $M$ and $\tilde{S}_{\tilde{C}_0}$, respectively, coincide on $W\cap U_0$. Therefore, for any $n\in \mathbb{N}^\ast$, one has
\begin{equation}\label{eq:3ec}
\left\{
\begin{array}{l}
  f\left(p^1_n\right)=\tilde{f}_{\tilde{C}_0}\left(p^1_n\right) \\
  (\grad f)\left(p^1_n\right)=\left(\grad \tilde{f}_{\tilde{C}_0}\right)\left(p^1_n\right) \\
  \left|(\grad f)\left(p^1_n\right)\right|=\left|\left(\grad \tilde{f}_{\tilde{C}_0}\right)\left(p^1_n\right)\right| \\
  (\Delta f)\left(p^1_n\right)=\left(\Delta \tilde{f}_{\tilde{C}_0}\right)\left(p^1_n\right)
\end{array}
\right..
\end{equation}
From the convergence of $\left(p^1_n\right)$ to the same $p_0$, with respect to both distance functions $d_M$ and $d_{\tilde{S}_{\tilde{C}_0}}$, and from the third equation in \eqref{eq:3ec}, one gets
$$
\left|(\grad f)\left(p_0\right)\right|=\left|\left(\grad \tilde{f}_{\tilde{C}_0}\right)\left(p_0\right)\right|.
$$
As $p_0\notin W$, we have $(\grad f)\left(p_0\right)=0$ and then $\left(\grad \tilde{f}_{\tilde{C}_0}\right)\left(p_0\right)=0$, which means that $p_0$ belongs to the boundary of $S_{\tilde{C}_0}$ in $\tilde{S}_{\tilde{C}_0}$. Thus, $\tilde{f}_{\tilde{C}_0}\left(p_0\right)=2\tilde{C}_0^{3/2}/3\neq 0$.

In particular, we get that $\partial^M W\subset\partial^{\tilde{S}_{\tilde{C}_0}} S_{\tilde{C}_0}$.

Using the first equation in \eqref{eq:3ec} and  the convergence of $\left(p^1_n\right)$ to $p_0$, with respect to $d_M$ and $d_{\tilde{S}_{\tilde{C}_0}}$, we obtain $f\left(p_0\right)=\tilde{f}_{\tilde{C}_0}\left(p_0\right)$.

From \eqref{f-bicons} for $c=0$, one has
$$
f\left(p^1_n\right)\left(\Delta f\right)\left(p^1_n\right)+\left|\left(\grad f\right)\left(p^1_n\right)\right|^2-f^4\left(p^1_n\right)=0,
$$
for any $n\in \mathbb{N^\ast}$.
We may pass to the limit with respect to the distance $d_M$ in the above equation and obtain
$$
f\left(p_0\right)\left(\Delta f\right)\left(p_0\right)+\left|\left(\grad f\right)\left(p_0\right)\right|^2-f^4\left(p_0\right)=0.
$$
According to the above facts, this is equivalent to
\begin{equation}\label{eq:contradiction}
\tilde{f}_{\tilde{C}_0}\left(p_0\right)\left(\Delta f\right)\left(p_0\right)-\tilde{f}_{\tilde{C}_0}^4\left(p_0\right)=0.
\end{equation}
We have also seen that there exists a sequence $\left(p^2_n\right)_{n\in\mathbb{N}^\ast}\subset \Int (M\setminus W)\cap U_0$ which converges to $p_0$, with respect to the distance $d_M$. Since $\grad f=0$ at any point of $\Int (M\setminus W)\cap U_0$ and $\Int (M\setminus W)\cap U_0$ is open in $M$, it is easy to see that $(\grad f)\left(p^2_n\right)=0$ and $(\Delta f)\left(p^2_n\right)=0$, for any $n\in \mathbb{N}^\ast$. Considering  the limit with respect to the distance $d_M$ in the above two relations we get $(\grad f)\left(p_0\right)=0$ and $(\Delta f)\left(p_0\right)=0$.

Substituting $(\Delta f)\left(p_0\right)=0$ in \eqref{eq:contradiction}, one obtains $\tilde{f}_{\tilde{C}_0}\left(p_0\right)=0$ and we come to a contradiction, which concludes the proof.
\end{proof}

\begin{theorem}\label{th:2connectedcomp}
Let $\varphi:M^2\to\mathbb{R}^3$ be a biconservative surface. Assume that
$$
W=\left\{p\in M\ |\ (\grad f)(p)\neq 0\right\}
$$
is dense and it has two connected components, $W_1$ and $W_2$. Assume that the boundaries of $W_1$ and $W_2$ in $M$ coincide and their common boundary is a smooth curve in $M$. Then, there exists a unique $\tilde{C}_0$ such that $\varphi(M)\subset \tilde{S}_{\tilde{C}_0}$. Moreover, if $M$ is complete and simply connected, then up to isometries of the domain and codomain, $\varphi$ is the map given in Theorem \ref{main_th1}.
\end{theorem}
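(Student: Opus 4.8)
The plan is to split $M$ into the two model ``halves'', apply the uniqueness result of Theorem~\ref{th:uniqueness-R3} to each, and then glue them along the common boundary curve. First I would unwind the hypotheses: $\grad f$ is continuous, so $W$ is open and $W_1,W_2$ are open; and $W$ dense forces $M\setminus W=\{p\in M:(\grad f)(p)=0\}$ to have empty interior, whence $M\setminus W=\partial^M W=\partial^M W_1=\partial^M W_2=:\Gamma$ and $M=W_1\cup\Gamma\cup W_2$ (a disjoint union), with $\Gamma$ a smooth embedded curve along which $\grad f\equiv0$; note $\Gamma\neq\emptyset$ since $M$ is connected. Restricting $\varphi$ to each of the connected open surfaces $W_1,W_2$ gives a biconservative surface with nowhere vanishing $\grad f$, so Theorem~\ref{th:uniqueness-R3} yields unique $\tilde C_1,\tilde C_2>0$ such that, after composing $\varphi$ with suitable isometries of $\mathbb R^3$, $\varphi(W_1)$ is an open subset of $S_{\tilde C_1}$ and $\varphi(W_2)$ is an open subset of a congruent copy of $S_{\tilde C_2}$.

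Next I would carry out the local analysis near $\Gamma$. Fix $p_0\in\Gamma$ and a neighbourhood $U_0$ of $p_0$ on which $\varphi$ is an embedding, and identify $U_0$ with the regular surface $\varphi(U_0)$. Since $\Gamma$ is a smooth curve, $U_0\setminus\Gamma$ has near $p_0$ exactly two components, one contained in $W_1$ and one in $W_2$ (both nonempty near $p_0$, as $p_0\in\partial^M W_1\cap\partial^M W_2$), and $\varphi(\Gamma\cap U_0)$ separates $\varphi(W_1\cap U_0)\subset S_{\tilde C_1}$ from $\varphi(W_2\cap U_0)\subset S_{\tilde C_2}$. The key point is that $\varphi(p_0)$ must lie on the boundary circle of $S_{\tilde C_1}$: if instead $\varphi(p_0)$ were an interior point of $S_{\tilde C_1}$, where $\grad f_{\tilde C_1}\neq0$, then since $\varphi(U_0)$ and $S_{\tilde C_1}$ share the open set $\varphi(W_1\cap U_0)$ accumulating at $\varphi(p_0)$ they would be tangent there, and $f-f_{\tilde C_1}\circ\varphi$ would vanish on the open half-disk $W_1\cap U_0$, hence so would all of its derivatives at $p_0$; this gives $df(p_0)=df_{\tilde C_1}(\varphi(p_0))\circ d\varphi(p_0)\neq0$, contradicting $(\grad f)(p_0)=0$. (Equivalently, one may argue as in the proof of Theorem~\ref{th:contradiction}, passing to the limit in \eqref{f-bicons}.) Thus $\varphi(\Gamma\cap U_0)\subset\partial S_{\tilde C_1}$ and, symmetrically, $\subset\partial S_{\tilde C_2}$; since on these circles the mean curvature equals $2\tilde C_i^{3/2}/3$ and $f$ is continuous on $M$, we obtain $2\tilde C_1^{3/2}/3=f(p_0)=2\tilde C_2^{3/2}/3$, i.e.\ $\tilde C_1=\tilde C_2=:\tilde C_0$.

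Now near $p_0$ the open pieces $\varphi(W_1\cap U_0)$ and $\varphi(W_2\cap U_0)$ sit inside two congruent copies of $S_{\tilde C_0}$ that are glued with $C^\infty$ smoothness along the arc $\varphi(\Gamma\cap U_0)$ of their common boundary circle, so Proposition~\ref{prop:along-curve} applies: the two copies either coincide or are mirror images in the plane of that circle. Coincidence is impossible, since $S_{\tilde C_0}$ lies on one side of its boundary circle while $\varphi|_{U_0}$ is injective and $\varphi(\Gamma\cap U_0)$ separates the two pieces; hence they fit together as $\tilde S_{\tilde C_0}$, so $\varphi(U_0)\subset\tilde S_{\tilde C_0}$ with $\varphi(W_2\cap U_0)$ in the mirror half. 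As $W_1,W_2$ are connected and biconservative surfaces of revolution are real-analytic (hence determined by their axis and an arc of the profile), the copy carrying all of $\varphi(W_1)$ is $S_{\tilde C_0}$ and the one carrying $\varphi(W_2)$ is its mirror image inside the same $\tilde S_{\tilde C_0}$; since $\tilde S_{\tilde C_0}$ is closed in $\mathbb R^3$ and $\varphi(\Gamma)\subset\overline{\varphi(W_1)}$, this gives $\varphi(M)=\varphi(W_1)\cup\varphi(\Gamma)\cup\varphi(W_2)\subset\tilde S_{\tilde C_0}$; uniqueness of $\tilde C_0$ follows because the surfaces $\tilde S_{\tilde C_0}$ are pairwise non-congruent and two of them containing the $2$-dimensional set $\varphi(W_1)$ must coincide. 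For the last assertion, if $M$ is complete and simply connected then $\varphi:M\to\tilde S_{\tilde C_0}$ is an isometric immersion of a complete surface into the connected surface $\tilde S_{\tilde C_0}$, hence a Riemannian covering and, $M$ being simply connected, a universal Riemannian covering; and substituting $\theta=\sinh^2 u$ in \eqref{eq:sigma_tc} identifies the profile of $\tilde S_{\tilde C_0}$ with that of Theorem~\ref{main_th1} for $C_0=9/\tilde C_0^{3}$, so $\varphi_{C_0}:(\mathbb R^2,g_{C_0})\to\tilde S_{\tilde C_0}$ is likewise a universal Riemannian covering. By uniqueness of universal Riemannian coverings there is an isometry $\Phi:(M,g)\to(\mathbb R^2,g_{C_0})$ with $\varphi_{C_0}\circ\Phi=\varphi$ up to the isometry of $\mathbb R^3$ used above, which is the claim.

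The main obstacle, I expect, is the local analysis along $\Gamma$ --- first proving that $\varphi$ sends $\Gamma$ to the boundary circles of the model halves (this is what converts the intrinsic condition $\grad f=0$ on $\Gamma$ into extrinsic information), and then controlling how the two congruent copies of $S_{\tilde C_0}$ are joined; it is here that Proposition~\ref{prop:along-curve}, the injectivity of the local embedding, and the real-analyticity of the models are all needed, and where the hypotheses $\partial^M W_1=\partial^M W_2$ and ``common boundary a smooth curve'' enter decisively.
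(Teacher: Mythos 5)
Your proposal is correct and follows essentially the same route as the paper: apply Theorem \ref{th:uniqueness-R3} to each component $W_i$, show via the limiting argument of Theorem \ref{th:contradiction} that the common boundary curve is sent to the boundary circles of the two model pieces (forcing $\tilde{C}_1=\tilde{C}_2$, which you get from continuity of $f$ where the paper uses equality of the circle radii), and then conclude the last assertion by the uniqueness of universal Riemannian coverings. You are in fact slightly more explicit than the paper at the gluing step, where you invoke Proposition \ref{prop:along-curve} to rule out the two halves lying in the same copy of $S_{\tilde{C}_0}$.
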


\begin{proof}
Let us consider $p_0\in \partial^M W_1=\partial^M W_2$. There exists an open subset $U_0$ in $M$, such that $p_0\in U_0$ and $\varphi_{|_{U_0}}:U_0\to\mathbb{R}^3$ is an embedding. Thus, we can identify $U_0=\varphi\left(U_0\right)\subset\mathbb{R}^3$.

Since $W_1$ and $W_2$ are connected, open in $M$ and $\grad f\neq 0$ at any point of them, from Theorem \ref{th:uniqueness-R3}, one obtains that there exist $\tilde{C}_0$ and $\tilde{C}_0^\prime$ such that $\varphi\left(W_1\right)$ is an open subset of $S_{\tilde{C}_0}$ and $\varphi\left(W_2\right)$ is an open subset of $S_{\tilde{C}_0^\prime}$.

It is easy to see that $U_0\cap W_1$ is open in $W_1$ and then $\varphi\left(U_0\cap W_1\right)=U_0\cap W_1$ is open in $S_{\tilde{C}_0}$. Analogously, $U_0\cap W_2$ is open in $W_2$ and then $\varphi\left(U_0\cap W_2\right)=U_0\cap W_2$ is open in $S_{\tilde{C}_0^\prime}$.

We note that, as $W$ is the reunion of two open disjoint subsets, and as $W$ is dense in $M$, one has
\begin{align*}
\partial^M W & = \partial^M W_1\cup \partial^M W_2=\partial^M W_1=\partial^M W_2 \\
& = M\setminus W
\end{align*}
and
$$
M=W\cup\partial^M W=W_1\cup W_2\cup \partial^M W_1.
$$
Therefore
\begin{align*}
  U_0 & = U_0\cap M \\
  & = \left(U_0\cap W_1\right)\cup \left(U_0\cap W_2\right)\cup \left(U_0\cap \partial^M W_1\right).
\end{align*}
We consider $U_0\cap\partial^M W_1$ as the image of a smooth curve $\gamma:I\to U_0$, $\gamma^\prime(s)\neq 0$, for any $s\in I$. It is clear from the proof of Theorem ~\ref{th:contradiction} that $\gamma(s)\in \tilde{S}_{\tilde{C}_0}$ and $\left(\grad\tilde{f}_{\tilde{C}_0}\right)(\gamma(s))=(\grad f)(\gamma(s))$. Further, the fact that $(\grad f)(\gamma(s))=0$ will follow as in the first part of the proof of Theorem ~\ref{th:W0C0} (for the simplicity of exposition we do not include it here); $\gamma(s)\notin W_1$ but now $W_1$ does not contain all the points where $\grad f$ is different from zero (as in Theorem ~\ref{th:contradiction}). Therefore, $\gamma(s)$ belongs to the boundary of $S_{\tilde{C}_0}$ in $\tilde{S}_{\tilde{C}_0}$, for any $s\in I$, which is a circle of radius $\tilde{C}_0^{-3/2}$. Using the same argument, we see that $\gamma(s)$ belongs to the boundary of $S_{\tilde{C}_0^\prime}$ in $\tilde{S}_{\tilde{C}_0^\prime}$, for any $s\in I$, which is a circle of radius $\tilde{C}_0^{\prime{-3/2}}$. Therefore, $\tilde{C}_0=\tilde{C}_0^\prime$ and $\varphi(M)\subset \tilde{S}_{\tilde{C}_0}$.

If $M$ is complete, $\varphi:M\to\tilde{S}_{\tilde{C}_0}$ is a covering space with the projection $\varphi$ and thus $\varphi(M)=\tilde{S}_{\tilde{C}_0}$. Moreover, if $M$ is also simply connected, then $M$ is a universal covering of $\tilde{S}_{\tilde{C}_0}$ with the projection $\varphi$.

The map $\varphi_{C_0}:\left(\mathbb{R}^2,g_{C_0}\right)\to\tilde{S}_{\tilde{C}_0}$ given in Theorem \ref{main_th1} is also a universal covering projection and therefore there exists an isometry $\Theta$ between $(M,g)$ and $\left(\mathbb{R}^2,g_{C_0}\right)$ such that $\varphi_{C_0}\circ \Theta=\varphi$.
\end{proof}

\vspace{0.2cm}

In the following, we restrict our study to the case when $\varphi:M^2\to\mathbb{R}^3$ is an embedding, i.e., $S=\varphi(M)$ is a regular surface in $\mathbb{R}^3$. The next result is the main ingredient for proving Conjecture 3.

\begin{theorem}\label{th:W0C0}
Let $S$ be a complete non-$CMC$ biconservative regular surface in $\mathbb{R}^3$. Denote by
$$
W=\left\{p\in S \ | \ (\grad f)(p)\neq0\right\}
$$
and by $W_0$ a connected component of $W$. Then, there exists a unique $\tilde{C}_0>0$ such that $W_0=S_{\tilde{C}_0}$. Moreover, the closure of $W_0$ in $S$ coincides with the closure of $S_{\tilde{C}_0}$ in $\tilde{S}_{\tilde{C}_0}$.
\end{theorem}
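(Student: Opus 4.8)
The plan is to recognise $W_0$ via the local uniqueness theorem, then to control the boundary of $W_0$ in $S$, and finally --- this is the crucial point --- to push geodesics of $S$ across that boundary using completeness. Since $S$ is non-$CMC$, $W_0$ is a nonempty connected open subset of $S$ with $\grad f$ nowhere zero, so Theorem~\ref{th:uniqueness-R3} applied to the inclusion $W_0\hookrightarrow\mathbb{R}^3$ gives a unique $\tilde{C}_0>0$ with $W_0\subset S_{\tilde{C}_0}$; being a $2$-dimensional submanifold of the $2$-dimensional regular surface $S_{\tilde{C}_0}$, $W_0$ is open in $S_{\tilde{C}_0}$, hence open in $\tilde{S}_{\tilde{C}_0}$, so $S$ and $\tilde{S}_{\tilde{C}_0}$ coincide on a neighbourhood of each point of $W_0$ and therefore $f$, $\grad f$, $|\grad f|$ agree on $W_0$ with $\tilde{f}_{\tilde{C}_0}$, $\grad\tilde{f}_{\tilde{C}_0}$, $|\grad\tilde{f}_{\tilde{C}_0}|$. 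The first substantial step (the one quoted in the proof of Theorem~\ref{th:2connectedcomp}) is that every $q_0\in\partial^S W_0$ has $(\grad f)(q_0)=0$: indeed $q_0\notin W$, for a connected neighbourhood of $q_0$ inside the open set $W$ would meet $W_0$ and hence lie in $W_0$; and, exactly as in the proof of Theorem~\ref{th:contradiction}, choosing an embedded chart $U_0$ around $q_0$ and a sequence $p_n\in W_0\cap U_0$ with $p_n\to q_0$ in $d_S$ (hence in $d_0$, hence --- as $\tilde{S}_{\tilde{C}_0}$ is closed in $\mathbb{R}^3$ --- in $d_{\tilde{S}_{\tilde{C}_0}}$), and passing to the limit in the equalities valid on $W_0$, one obtains $|(\grad f)(q_0)|=|(\grad\tilde{f}_{\tilde{C}_0})(q_0)|$ and $f(q_0)=\tilde{f}_{\tilde{C}_0}(q_0)$. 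Since $\grad\tilde{f}_{\tilde{C}_0}$ vanishes on $\tilde{S}_{\tilde{C}_0}$ only along the boundary circle $C_{\tilde{C}_0}$ of $S_{\tilde{C}_0}$ in $\tilde{S}_{\tilde{C}_0}$ (the fixed circle of the reflection defining $\tilde{S}_{\tilde{C}_0}$), this forces $q_0\in C_{\tilde{C}_0}$ and $f(q_0)=2{\tilde{C}_0}^{3/2}/3\neq0$; in particular $\partial^S W_0\subset C_{\tilde{C}_0}$ and $\partial^S W_0\cap S_{\tilde{C}_0}=\emptyset$.

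Next I would upgrade this to $W_0=S_{\tilde{C}_0}$. The device is that a geodesic arc of $\tilde{S}_{\tilde{C}_0}$ lying in $W_0$ is also a geodesic arc of $S$, so by completeness of $S$ it extends past its far endpoint; since both $S$ and $\tilde{S}_{\tilde{C}_0}$ carry the subspace topology of $\mathbb{R}^3$, the value of the extension at that endpoint coincides with the $\mathbb{R}^3$-limit of the arc, which therefore lies in $S$. Concretely, for $x\in\overline{W_0}^{\,\tilde{S}_{\tilde{C}_0}}\setminus W_0$ I would fix a convex normal ball $B$ around $x$ in $\tilde{S}_{\tilde{C}_0}$, a point $p\in W_0\cap B$, the minimizing geodesic $\sigma\colon[0,\ell]\to B$ from $p$ to $x$, and $s^\ast=\sup\{s:\sigma([0,s])\subset W_0\}$; then $\sigma(s^\ast)\in S$ by the device, and either $s^\ast=\ell$ (so $x=\sigma(s^\ast)\in\partial^S W_0\subset C_{\tilde{C}_0}$) or $s^\ast<\ell$ (so $\sigma(s^\ast)\in\partial^S W_0\subset C_{\tilde{C}_0}$). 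If in addition $x\in S_{\tilde{C}_0}$, one may take $B\subset S_{\tilde{C}_0}$, and then either alternative puts a point of $C_{\tilde{C}_0}$ inside $S_{\tilde{C}_0}$ --- impossible. Hence $W_0$ has empty boundary in $S_{\tilde{C}_0}$, so being nonempty, open and closed in the connected surface $S_{\tilde{C}_0}$, it equals $S_{\tilde{C}_0}$ (uniqueness of $\tilde{C}_0$ being clear, since distinct $S_{\tilde{C}_0}$ are not even locally isometric). For the closure statement, every $x\in C_{\tilde{C}_0}$ lies in $\overline{W_0}^{\,\tilde{S}_{\tilde{C}_0}}=\overline{S_{\tilde{C}_0}}^{\,\tilde{S}_{\tilde{C}_0}}$, and running the same argument with a convex normal ball $B$ around $x$: the alternative $s^\ast<\ell$ would force the geodesic $\sigma$ to meet the closed geodesic $C_{\tilde{C}_0}$ at the two distinct points $\sigma(s^\ast)$ and $x$ of $B$, which is impossible; hence $s^\ast=\ell$, so $x=\sigma(s^\ast)\in S$ and $x\in\partial^S W_0$. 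Therefore $\partial^S W_0=C_{\tilde{C}_0}$ and $\overline{W_0}^S=W_0\cup C_{\tilde{C}_0}=\overline{S_{\tilde{C}_0}}^{\,\tilde{S}_{\tilde{C}_0}}$.

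I expect the main obstacle to be precisely the need for the geodesic-extension device above: a complete regular surface in $\mathbb{R}^3$ need not be closed in $\mathbb{R}^3$, so one cannot simply say that an $\mathbb{R}^3$-limit of points of $W_0\subset S$ belongs to $S$; one has to realise such a limit as the endpoint of a geodesic of $S$, using geodesic completeness of $S$ together with the facts that $\tilde{S}_{\tilde{C}_0}$ is properly embedded (hence closed in $\mathbb{R}^3$) and that $S$ and $\tilde{S}_{\tilde{C}_0}$ literally share the open piece $W_0$. A secondary, routine point is bookkeeping the three distance functions $d_0$, $d_S$ and $d_{\tilde{S}_{\tilde{C}_0}}$ when passing to limits, exactly as in the proof of Theorem~\ref{th:contradiction}.
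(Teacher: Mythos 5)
Your proposal is correct and follows essentially the same route as the paper: the maximality argument showing $\grad f=0$ on $\partial^S W_0$, the identification of $W_0$ as an open subset of some $S_{\tilde{C}_0}$ via Theorem \ref{th:uniqueness-R3}, and above all the key device of extending geodesics of the shared open piece $W_0\subset S\cap\tilde{S}_{\tilde{C}_0}$ by completeness of $S$ and recovering the endpoint as an $\mathbb{R}^3$-limit (using that $\tilde{S}_{\tilde{C}_0}$ is closed in $\mathbb{R}^3$) are exactly the paper's ingredients. The only, cosmetic, difference is that you run the extension argument with minimizing geodesics in convex normal balls and conclude by an open-and-closed argument in the connected $S_{\tilde{C}_0}$, whereas the paper works concretely with the meridians (profile curves) and the boundary circle of the surface of revolution.
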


\begin{proof}
We note that $W_0$ is closed and also open in $W$, as $W_0$ is a connected component of $W$, and $W$ is an open subset of $S$. It is clear that $W_0$ is also open in $S$.

We denote by $\partial^S W_0$ the boundary of $W_0$ in $S$. It is non-empty, as if it was, $W_0$ would be also closed in $S$, so $W_0=S$. But this implies $S\subset S_{\tilde{C}_0}$, for some $\tilde{C}_0$, which contradicts the completeness of $S$.

We will prove that $(\grad f)\left(q\right)=0$, for any $q\in \partial^S W_0$. To do this, we assume that there exists a point $q_0\in \partial^S W_0$ such that $(\grad f)\left(q_0\right)\neq0$. Then, it follows that one has an open ball $B^2\left(q_0;r_0\right)$ in $S$, $r_0>0$, such that $\grad f$ is different from zero at any point.

Obviously, $q_0$ belongs to the closure of $W_0$ in $S$, and then $B^2\left(q_0;r_0\right)\cap W_0\neq \emptyset$. Since $B^2\left(q_0;r_0\right)$ and $W_0$ are connected sets, we get that $B^2\left(q_0;r_0\right)\cup W_0$ is also connected.  Moreover, as $\grad f$ is different from zero at any point of $B^2\left(q_0;r_0\right)$, it follows that $B^2\left(q_0;r_0\right)\subset W$ and, therefore $B^2\left(q_0;r_0\right)\cup W_0$ is a connected subset of $W$. Now, from the maximality of $W_0$ in $W$ with respect to the inclusion, we have $B^2\left(q_0;r_0\right)\cup W_0=W_0$, i.e., $B^2\left(q_0;r_0\right)\subset W_0$. Obviously, $q_0\in W_0$, and this cannot be true because $q_0\in \partial^S W_0$ and $W_0$ is open $S$.

Thus, one has $(\grad f)\left(q\right)=0$, for any $q\in \partial^S W_0$.

From Theorem \ref{th:uniqueness-R3}, since $W_0$ is connected and $\grad f\neq 0$ at any point of $W_0$, one obtains that there exists a unique $\tilde{C}_0$ such that $W_0$ is open in $S_{\tilde{C}_0}$. Moreover, we will prove that, as $S$ is complete, $W_0=S_{\tilde{C}_0}$.

Let us consider $\sigma_{\tilde{C}_0}:(0,\infty)\to \mathbb{R}^2$ the profile curve of $S_{\tilde{C}_0}$, $\sigma_{\tilde{C}_0}(0,\infty)\subset S_{\tilde{C}_0}$. We can reparametrize $\sigma_{\tilde{C}_0}$ by arc-length, such that the new curve, denoted also by $\sigma_{\tilde{C}_0}=\sigma_{\tilde{C}_0}(\theta)$, has the same orientation as the initial one, is defined on $(0,\infty)$ and in zero has the same limit point $\left(\tilde{C}_0^{-3/2},0\right)$ on the boundary of $S_{\tilde{C}_0}$. The new curve $\sigma_{\tilde{C}_0}$ is a parametrized geodesic of $S_{\tilde{C}_0}$ and $\left(\grad f_{\tilde{C}_0}\right)\left(\sigma_{\tilde{C}_0}(\theta)\right)\neq 0$, for any $\theta>0$.

Next, we will prove that $\sigma_{\tilde{C}_0}(0,\infty)\subset W_0$. Clearly, there exists a point $\theta_0\in (0,\infty)$ such that $\sigma_{\tilde{C}_0}\left(\theta_0\right)\in W_0$. Since $\sigma_{\tilde{C}_0}$ is continuous and $W_0$ is open in $S_{\tilde{C}_0}$, it follows that exists $\varepsilon_0>0$ such that $\left(\theta_0-\varepsilon_0,\theta_0+\varepsilon_0\right)\subset (0,\infty)$ and
$$
\sigma_{\tilde{C}_0}\left(\theta_0-\varepsilon_0,\theta_0+\varepsilon_0\right)\subset W_0.
$$
Assume that $\sigma_{\tilde{C}_0}(0,\infty)\not\subset W_0$, i.e., there exists $\theta'\in (0,\infty)\setminus\left(\theta_0-\varepsilon_0,\theta_0+\varepsilon_0\right)$ such that $\sigma_{\tilde{C}_0}\left(\theta'\right)\not\in W_0$.

Assume that $\theta'\geq \theta_0+\varepsilon_0$. Denote
$$
\Omega=\left\{ \theta \ |\ \theta>\theta_0, \sigma_{\tilde{C}_0}(\theta)\not\in W_0\right\}
$$
and $\theta_1=\inf \Omega$. Of course, $\theta_1\geq \theta_0+\varepsilon_0$ and $\sigma_{\tilde{C}_0}(\theta)\in W_0$ for any $\theta \in \left[\theta_0,\theta_1\right)$.

Next, we will see that $\sigma_{\tilde{C}_0}\left(\theta_1\right)\not\in W_0$. Indeed, if $\sigma_{\tilde{C}_0}\left(\theta_1\right)\in W_0$, it follows that there exists $\varepsilon_1>0$ such that $\left(\theta_1-\varepsilon_1,\theta_1+\varepsilon_1\right)\subset (0,\infty)$ and $\sigma_{\tilde{C}_0}\left(\theta_1-\varepsilon_1,\theta_1+\varepsilon_1\right)\subset W_0$. Therefore we obtain a contradiction because, in this case, $\theta_1$ cannot be the infimum of $\Omega$.

Since $\sigma_{\tilde{C}_0}(\theta)\in W_0$ for any $\theta \in \left[\theta_0,\theta_1\right)$, it is clear that $\sigma_{\tilde{C}_0}\left(\theta_1\right)$ belongs to the closure of $W_0$ in $S_{\tilde{C}_0}$, denoted by $\overline{W_0}^{S_{\tilde{C}_0}}$. As $\sigma_{\tilde{C}_0}\left(\theta_1\right)\not\in W_0$ and $W_0$ is open in $S_{\tilde{C}_0}$, one obtains that $\sigma_{\tilde{C}_0}\left(\theta_1\right)\in \partial^{S_{\tilde{C}_0}} W_0$, i.e. $\sigma_{\tilde{C}_0}\left(\theta_1\right)$ belongs to the boundary of $W_0$ in $S_{\tilde{C}_0}$.

We have seen that $\sigma_{\tilde{C}_0}(\theta)\in W_0$, for any $\theta\in \left(\theta_0-\varepsilon_0,\theta_1\right)$. Now, since $W_0$ is open in $S$ and $\sigma_{\tilde{C}_0}$ is a parametrized geodesic of $S_{\tilde{C}_0}$, we get that $\sigma_{\tilde{C}_0}$ defined on $\left(\theta_0-\varepsilon_0,\theta_1\right)$ is also a parametrized geodesic of $S$. As $S$ is complete, we can consider the parametrized geodesic $\sigma^S$ of $S$ defined on the whole $\mathbb{R}$ such that $\left.\sigma^S\right|_{\left(\theta_0-\varepsilon_0,\theta_1\right)}= \left.\sigma_{\tilde{C}_0}\right|_{\left(\theta_0-\varepsilon_0,\theta_1\right)}$. Since $\sigma^S$ and $\sigma_{\tilde{C}_0}$ are continuous, we must have $\sigma^S\left(\theta_1\right)=\sigma_{\tilde{C}_0}\left(\theta_1\right)$. Now, we note that $\sigma^S\left(\theta_1\right)\in \partial^S W_0$, because $\sigma^S\left(\theta_1\right)\in \overline{W_0}^{S}$ and $\sigma^S\left(\theta_1\right)=\sigma_{\tilde{C}_0}\left(\theta_1\right)\not\in W_0$. Therefore,
\begin{equation}\label{eq:grad=0}
(\grad f)\left(\sigma^S\left(\theta_1\right)\right)=(\grad f)\left(\sigma_{\tilde{C}_0}\left(\theta_1\right)\right)=0.
\end{equation}

We know that $W_0$ is open in both $S$ and $S_{\tilde{C}_0}$, thus the mean curvature function of $S$ coincides with the mean curvature function of $S_{\tilde{C}_0}$ on $W_0$, i.e., $\left. f\right|_{W_0}=\left. f_{\tilde{C}_0}\right|_{W_0}$, and their gradients are also equal, i.e., $\left.\left(\grad f\right)\right|_{W_0}=\left.\left(\grad f_{\tilde{C}_0}\right)\right|_{W_0}$. Clearly,
$$
(\grad f)\left(\sigma^S(\theta)\right)=\left(\grad f_{\tilde{C}_0}\right)\left(\sigma_{\tilde{C}_0}(\theta)\right), \qquad \theta\in \left(\theta_0-\varepsilon_0,\theta_1\right),
$$
and, then,
$$
\left|(\grad f)\left(\sigma^S(\theta)\right)\right|=\left|\left(\grad f_{\tilde{C}_0}\right)\left(\sigma_{\tilde{C}_0}(\theta)\right)\right|, \qquad \theta\in \left(\theta_0-\varepsilon_0,\theta_1\right).
$$
We take the limit in the above relation and obtain
$$
\left|(\grad f)\left(\sigma^S\left(\theta_1\right)\right)\right|=\left|\left(\grad f_{\tilde{C}_0}\right)\left(\sigma_{\tilde{C}_0}\left(\theta_1\right)\right)\right|.
$$
Using \eqref{eq:grad=0}, one gets a contradiction, because $\left(\grad f_{\tilde{C}_0}\right)\left(\sigma_{\tilde{C}_0}\left(\theta_1\right)\right)\neq0$, as $\sigma_{\tilde{C}_0}\left(\theta_1\right)\in S_{\tilde{C}_0}$.

Thus, $\sigma_{\tilde{C}_0}(\theta)\in W_0$, for any $\theta \geq \theta_0+\varepsilon_0$.

In the same way, we can prove that $\sigma_{\tilde{C}_0}(\theta)\in W_0$, also for any $\theta \leq \theta_0-\varepsilon_0$.

Finally, we obtain that $\sigma_{\tilde{C}_0}(0,\infty)\subset W_0$ and, then $\sigma_{\tilde{C}_0}=\left.\sigma^S\right|_{(0,\infty)}$.

Now, we recall that $S_{\tilde{C}_0}$ is open in $\tilde{S}_{\tilde{C}_0}$, and then $W_0$ is also open in $\tilde{S}_{\tilde{C}_0}$. Since $\tilde{S}_{\tilde{C}_0}$ is complete, we can consider the parametrized geodesic $\tilde{\sigma}_{\tilde{C}_0}$ of $\tilde{S}_{\tilde{C}_0}$ defined on the whole $\mathbb{R}$ such that $\left.\tilde{\sigma}_{\tilde{C}_0}\right|_{\left(0,\infty\right)}= \sigma_{\tilde{C}_0}=\left.\sigma^S\right|_{\left(0,\infty\right)}$. Obviously, $\sigma^S(0)=\tilde{\sigma}_{\tilde{C}_0}(0)$, thus
$\left.\tilde{\sigma}_{\tilde{C}_0}\right|_{\left[0,\infty\right)}=\left.\sigma^S\right|_{\left[0,\infty\right)}$ and the closure of $W_0$ in $S$ is included in the closure of $S_{\tilde{C}_0}$ in $\tilde{S}_{\tilde{C}_0}$.

Further, we consider $\gamma$ the curve parametrized by arc-length, defined on the whole $\mathbb{R}$, which gives the boundary of $S_{\tilde{C}_0}$ in $\tilde{S}_{\tilde{C}_0}$. Clearly, $\gamma$ is a parametrized geodesic of $\tilde{S}_{\tilde{C}_0}$. According to the above observations, it follows that there exist $a,b\in\mathbb{R}$ with $a<b$, such that $\gamma(a,b)$ belongs also to $S$. Moreover, $\gamma$ defined on $(a,b)$ is also a geodesic of $S$ because, along it, the normal vector field to $S$ coincide with the normal vector field to $\tilde{S}_{\tilde{C}_0}$ and it is collinear with the principal unit normal vector of $\left.\gamma\right|_{(a,b)}$. Since $S$ is complete, we can consider $\gamma^S:\mathbb{R}\to S$ the parametrized geodesic of $S$ such that $\left.\gamma^S\right|_{(a,b)}=\left.\gamma\right|_{(a,b)}$.

We note that the maximal interval which contains $(a,b)$ and has the property that its image by $\gamma$ is contained in $S$, is $\mathbb{R}$. Indeed, assume that there exists $b'$, $b\leq b'<\infty$, such that $\gamma(a,b')\subset S$ and $\gamma(b')\notin S$. It is now easy to see that, as $\left.\gamma^S\right|_{(a,b')}=\left.\gamma\right|_{(a,b')}$, $\gamma(b')=\gamma^S(b')\in S$, and thus we get a contradiction.

Therefore, $W_0=S_{\tilde{C}_0}$.
\end{proof}

\begin{remark}
The proof of Theorem \ref{th:W0C0} can be summarized as in Figure \ref{Figura14}, where the yellow region represents the connected component $W_0$ of $W$ in $S$, and the surface of revolution represented in green is the corresponding $S_{\tilde{C}_0}$ (given by Theorem \ref{th:uniqueness-R3}). It is suggested that, in fact, all the meridians of $S_{\tilde{C}_0}$ which intersect $W_0$ are contained in $W_0$ and then, as the boundary of $W_0$ in $S$ has to be the whole circle which gives the boundary of $S_{\tilde{C}_0}$ in $\tilde{S}_{\tilde{C}_0}$, $W_0=S_{\tilde{C}_0}$.

\begin{figure}
\begin{tikzpicture}
\node [inner sep=0pt] at (-1,0) {\includegraphics[scale=0.38]{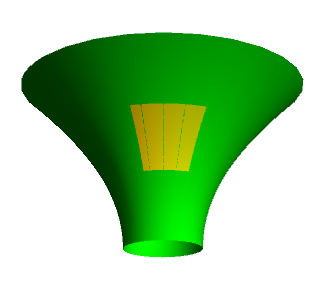}};
\draw[->,thick](1,0)-- (5.5,0);
\node [inner sep=0pt] at (8,0) {\includegraphics[scale=0.38]{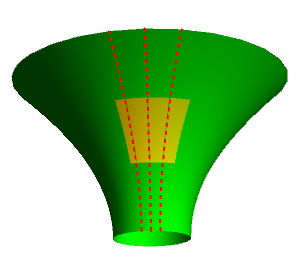}};
\draw[->,thick](8,-2)-- (8,-4);
\node [inner sep=0pt] at (8,-6) {\includegraphics[scale=0.4]{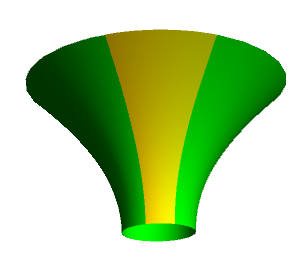}};
\draw[->,thick](5.5,-6)-- (1.2,-6);
\node [inner sep=0pt] at (-1,-6) {\includegraphics[scale=0.38]{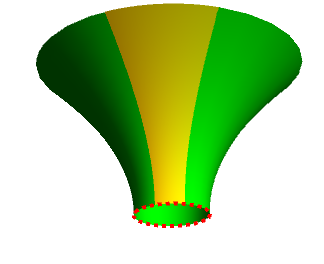}};
\draw[-,thick](-0.9,-7.5)-| (-0.9,-8.5);
\draw[-,thick](-0.9,-8.5)-| (3,-8.5);
\draw[->,thick](3,-8.5)-| (3,-9.5);
\node [inner sep=0pt] at (3.5,-11.5) {\includegraphics[scale=0.4]{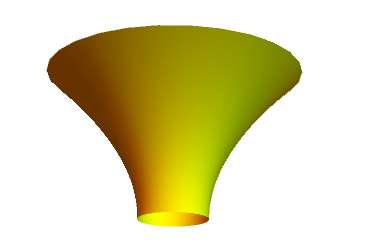}};
\end{tikzpicture}
\caption{The idea of the proof of Theorem \ref{th:W0C0}} \label{Figura14}
\end{figure}
\end{remark}

A first consequence of the above result is the following theorem.

\begin{theorem}\label{th:main-result-subsect}
Let $S$ be a biconservative regular surface in $\mathbb{R}^3$. Assume that $S$ is compact. Then $S$ is $CMC$, and therefore a round sphere.
\end{theorem}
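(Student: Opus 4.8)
The plan is to derive a contradiction from the assumption that a compact biconservative regular surface $S$ fails to be $CMC$, using Theorem \ref{th:W0C0}, and then to invoke the classical rigidity of compact embedded $CMC$ surfaces.

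First I would note that a compact regular surface $S\subset\mathbb{R}^3$ is automatically complete, so Theorem \ref{th:W0C0} applies verbatim. Assume, towards a contradiction, that $S$ is not $CMC$; then, since the mean curvature function $f$ is not constant, the open set $W=\left\{p\in S\ |\ (\grad f)(p)\neq 0\right\}$ is non-empty, and we may pick one of its connected components $W_0$. By Theorem \ref{th:W0C0} there is a (unique) $\tilde{C}_0>0$ with $W_0=S_{\tilde{C}_0}$, as subsets of $\mathbb{R}^3$ under the embedding. But $S_{\tilde{C}_0}=X_{\tilde{C}_0}\big(({\tilde{C}_0}^{-3/2},\infty)\times\mathbb{R}\big)$ is an \emph{unbounded} subset of $\mathbb{R}^3$: the point $X_{\tilde{C}_0}(\rho,v)=(\rho\cos v,\rho\sin v,u_{\tilde{C}_0}(\rho))$ has norm at least $\rho$, which tends to infinity as $\rho\to\infty$. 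On the other hand $W_0\subseteq S$, and $S$ is compact, hence bounded in $\mathbb{R}^3$. This contradiction forces $W=\emptyset$, i.e. $\grad f\equiv 0$ and $f$ is constant on $S$.

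It remains to see that this constant is non-zero and that $S$ is then a round sphere. Since there is no compact minimal surface in $\mathbb{R}^3$ (the three coordinate functions restricted to $S$ would be harmonic, hence constant, on a compact manifold), we cannot have $f\equiv 0$; so $f$ is a non-zero constant, $S$ satisfies \eqref{eq:bicons} trivially, and $S$ is a compact $CMC$ surface. Finally, as ``regular surface'' means that $S$ is embedded, $S$ is a compact embedded surface of constant mean curvature in $\mathbb{R}^3$, and Alexandrov's theorem yields that $S$ is a round sphere.

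I do not anticipate a real obstacle here: the whole weight of the argument is borne by Theorem \ref{th:W0C0}, and the remaining ingredients (completeness of a compact surface, unboundedness of $S_{\tilde{C}_0}$, non-existence of compact minimal surfaces in $\mathbb{R}^3$, and Alexandrov's theorem) are standard. One could also rule out $f\equiv 0$ more hands-on, by observing that at a point of $S$ of maximal distance from the origin the second fundamental form of $S$ is positive definite and hence $f\neq 0$ there; and one could replace the citation of Alexandrov's theorem by running the moving-plane reflection argument directly on the compact embedded $CMC$ surface $S$. The route through Alexandrov's theorem is, however, the most economical.
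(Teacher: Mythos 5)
Your proof is correct and takes essentially the same route as the paper: assume non-$CMC$, apply Theorem \ref{th:W0C0} to a connected component $W_0$ of $W$ to identify it with the unbounded set $S_{\tilde{C}_0}$, contradict compactness, and then conclude via Alexandrov's theorem. The extra details you supply (compactness implies completeness, explicit unboundedness of $S_{\tilde{C}_0}$, exclusion of the minimal case) are all sound and merely make explicit what the paper leaves implicit.
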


\begin{proof}
Assume that $S$ is non-$CMC$, i.e., $W=\left\{p\in S\ | \ (\grad f)(p)\neq0\right\}$ is a non-empty open subset of $S$. Let us consider $W_0$ a connected component of $W$. From Theorem \ref{th:W0C0}, it follows that there exists $\tilde{C}_0>0$ such that $W_0=S_{\tilde{C}_0}$ and the closure of $W_0$ in $S$ coincides with the closure of $S_{\tilde{C}_0}$ in $\tilde{S}_{\tilde{C}_0}$. Since $S_{\tilde{C}_0}$ is unbounded in $\mathbb{R}^3$, we obtain that $S$ is unbounded too, and this is a contradiction as $S$ is compact.

The last part follows from a well-known result of Alexandrov (see, for example \cite{MR05}).
\end{proof}

Another consequence of Theorem \ref{th:W0C0} is the next result, which is a counter part of Theorem ~\ref{th:contradiction} for regular surfaces.

\begin{theorem}\label{th:nonbicons}
Let $S$ be a complete non-CMC regular surface in $\mathbb{R}^3$. Assume that
$$
W=\left\{p\in S\ | \ (\grad f)(p)\neq0\right\}
$$
is connected. Then $S$ cannot be biconservative.
\end{theorem}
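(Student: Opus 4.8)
The plan is to use Theorem~\ref{th:W0C0} to determine $W$ exactly, and then to reduce to the situation of Theorem~\ref{th:contradiction}. Assume, for contradiction, that $S$ is biconservative. Since $S$ is non-$CMC$, the set $W$ is a non-empty open subset of $S$; being connected, it is its own (unique) connected component, so Theorem~\ref{th:W0C0} applies with $W_0=W$. Using the completeness of $S$, it gives a unique $\tilde{C}_0>0$ with $W=S_{\tilde{C}_0}$ and, moreover, $\overline{W}^{S}=\overline{S_{\tilde{C}_0}}^{\tilde{S}_{\tilde{C}_0}}=S_{\tilde{C}_0}\cup\gamma$, where $\gamma$ is the boundary circle of $S_{\tilde{C}_0}$ in $\tilde{S}_{\tilde{C}_0}$, of radius $\tilde{C}_0^{-3/2}$; along $\gamma$ one has $f=2\tilde{C}_0^{3/2}/3\neq 0$ and $\grad f=0$.

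Next I would check that $S$ and $W$ satisfy the hypotheses of Theorem~\ref{th:contradiction}. Since $W$ is open in $S$, $\partial^S W=\overline{W}^{S}\setminus W=\gamma\neq\emptyset$. The closed subset $\overline{W}^{S}=S_{\tilde{C}_0}\cup\gamma$ of $S$ is a $2$-manifold with non-empty boundary $\gamma$ (because $S_{\tilde{C}_0}$ is an open submanifold of the smooth surface $\tilde{S}_{\tilde{C}_0}$ whose frontier is the smooth embedded circle $\gamma$); since $S$ is connected and boundaryless, it follows that $S\setminus\overline{W}^{S}\neq\emptyset$. This set is open and disjoint from $W$, so $\Int(S\setminus W)\supseteq S\setminus\overline{W}^{S}$ is non-empty. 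Moreover, near any $p_0\in\gamma$ the surface $S$ is a disk, $W$ is a half-disk bounded there by an arc of $\gamma$, and the complementary open half-disk lies in $S\setminus\overline{W}^{S}$; hence $\gamma\subseteq\overline{\Int(S\setminus W)}$, and since $S\setminus W$ is closed this yields $\partial^S\Int(S\setminus W)=\gamma=\partial^S(S\setminus W)$. Theorem~\ref{th:contradiction} then forbids $S$ from being biconservative, a contradiction.

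Equivalently, one can finish by the limiting argument behind Theorem~\ref{th:contradiction} directly: fix $p_0\in\gamma$; letting points of $W=S_{\tilde{C}_0}$ tend to $p_0$ in \eqref{f-bicons} with $c=0$, and using the continuity on $S$ of $f$, $\grad f$ and $\Delta f$ together with $f(p_0)\neq 0$, gives $\Delta f(p_0)=f(p_0)^3>0$; on the other hand $\grad f\equiv 0$ on the open set $S\setminus\overline{W}^{S}$, so $f$ is locally constant and $\Delta f\equiv 0$ there, and taking a sequence in $S\setminus\overline{W}^{S}$ converging to $p_0$ gives $\Delta f(p_0)=0$. This contradiction proves the theorem.

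The main obstacle is the topological step in the second paragraph: one must be sure that $S\setminus W$ has non-empty interior and that this interior accumulates along the whole of $\partial^S W$. This is precisely where the full strength of Theorem~\ref{th:W0C0} enters — it is essential that $W$ coincides with the entire surface $S_{\tilde{C}_0}$ rather than being merely an open subset of it (all that Theorem~\ref{th:uniqueness-R3} by itself provides), so that $\partial^S W$ is the complete boundary circle $\gamma$ on which $f$ is a nonzero constant; this in turn rests on the completeness of $S$ through the geodesic-completion argument used to prove Theorem~\ref{th:W0C0}.
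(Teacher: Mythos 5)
Your proposal is correct and follows essentially the same route as the paper: Theorem \ref{th:W0C0} identifies $W$ with $S_{\tilde{C}_0}$, one checks that $S\setminus\overline{W}^{S}$ is non-empty and accumulates along the whole boundary circle, and the two-sequence limiting argument behind Theorem \ref{th:contradiction} yields the contradiction. The paper reruns that limiting argument rather than invoking Theorem \ref{th:contradiction} as a black box, but this is only a cosmetic difference.
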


\begin{proof}
Assume that $S$ is biconservative. From Theorem \ref{th:W0C0} it follows that there exists $\tilde{C}_0>0$ such that $W=S_{\tilde{C}_0}$ and the closure of $W$ in $S$ coincides with the closure of $S_{\tilde{C}_0}$ in $\tilde{S}_{\tilde{C}_0}$.

It is easy to see that $W$ is not dense in $S$, because if we assume so, then $S=\overline{W}^S=\overline{S}_{\tilde{C}_0}^{\tilde{S}_{\tilde{C}_0}}$. This would mean that $S$ is a surface with boundary, fact that contradicts the regularity of $S$. Therefore, $S\setminus\overline{W}^S$ is non-empty and open in $S$. We note that, as
$$
S\setminus W=\overline{S\setminus W}^S=\overline{S\setminus\overline{W}},
$$
we get
$$
\partial^SW=\partial^S\overline{W}^S=\partial^S(S\setminus{\overline{W}^S}).
$$

Further, let us consider $p_0\in \partial^S W=\partial ^S \left(S\setminus\overline{W}^S\right)$, which is the circle of radius $\tilde{C}_0^{-3/2}$. Then there exists a sequence $\left(p^1_n\right)_{n\in\mathbb{N}^\ast}$ in $W$, with $p^1_n\neq p_0$, for any $n\in\mathbb{N}^\ast$ which converges to $p_0$, with respect to the distance function $d_S$ on $S$, and another sequence $\left(p^2_n\right)_{n\in\mathbb{N}^\ast}$ in $S\setminus\overline{W}^S$, with $p^2_n\neq p_0$, for any $n\in\mathbb{N}^\ast$ which converges to $p_0$, with respect to the same $d_S$. Since $W$ and $S\setminus \overline{W}^S$ are open in $S$, $\grad f$ is different from zero at any point of $W$, and $\grad f$, $\Delta f$ vanish at any point of $S\setminus \overline{W}^S$, we can use the same argument as in the proof of Theorem \ref{th:contradiction} to obtain a contradiction.

Therefore, our assumption is false, and $S$ cannot be biconservative.
\end{proof}

Now we can state the main result.

\begin{theorem}\label{th:nonCMCbicons}
Let $S$ be a complete non-$CMC$ biconservative regular surface in $\mathbb{R}^3$. Then $S=\tilde{S}_{\tilde{C}_0}$.
\end{theorem}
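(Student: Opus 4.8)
The strategy is to pick a connected component $W_0$ of $W=\{p\in S:(\grad f)(p)\neq 0\}$; to use Theorem~\ref{th:W0C0} to identify $\overline{W_0}^{S}$ with the ``half'' $S_{\tilde C_0}\cup\gamma$ of $\tilde S_{\tilde C_0}$, where $\gamma$ is the boundary circle of $S_{\tilde C_0}$; then to show that across $\gamma$ the surface $S$ can only continue as the mirror image of $S_{\tilde C_0}$, so that $S\supseteq\tilde S_{\tilde C_0}$; and finally to close the argument with a connectedness step. First I would note that $W$ is a non-empty open subset of $S$ (since $S$ is non-$CMC$) and that $W\neq S$: otherwise $S\subseteq S_{\tilde C_0}$ by Theorem~\ref{th:uniqueness-R3}, contradicting completeness, exactly as at the start of the proof of Theorem~\ref{th:W0C0}.

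Next I would show that $S\setminus W$ has empty interior, i.e. that $W$ is dense. Assume $U$ is a connected component of $\Int^{S}(S\setminus W)$; then $\partial^{S}U\neq\emptyset$ (otherwise $U=S$ and $W=\emptyset$), and one checks $\partial^{S}U\subseteq\partial^{S}W$, so that some $p_0\in\partial^{S}U$ lies on the boundary circle $\gamma_0=\partial^{S}W_0$ of a component $W_0$ of $W$. By Theorem~\ref{th:W0C0}, $W_0=S_{\tilde C_0}$, $(\grad f)(p_0)=0$ and $f(p_0)=2\tilde C_0^{3/2}/3\neq 0$. Passing to the limit in \eqref{f-bicons} (with $c=0$) along a sequence in $W_0$ converging to $p_0$ gives $(\Delta f)(p_0)=f(p_0)^3\neq 0$, whereas $\grad f$, hence $\Delta f$, vanishes identically on $U$, so $(\Delta f)(p_0)=0$ by continuity of $f$ — a contradiction. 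This is the mechanism of Theorem~\ref{th:contradiction}, now available because, on a regular surface, the boundary circles of the components of $W$ supply the required boundary points.

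For the core step I would fix a component $W_0$; by Theorem~\ref{th:W0C0}, $W_0=S_{\tilde C_0}$ (placed in standard position) and $\overline{W_0}^{S}=S_{\tilde C_0}\cup\gamma$, with $\gamma$ the boundary circle (radius $\tilde C_0^{-3/2}$), on which $\grad f=0$ and $f\equiv f_0:=2\tilde C_0^{3/2}/3$. Fix $p_0\in\gamma$. Since $S$ is a boundaryless regular surface while $\overline{W_0}^{S}$ is a surface with boundary $\gamma$ contained in $S$, no neighbourhood of $p_0$ in $S$ can lie inside the manifold-with-boundary $\overline{W_0}^{S}$; together with the density of $W$ this forces $p_0$ to be a limit of points of $W\setminus W_0$. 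Let $W_i$ be a component of $W$ with $p_0\in\overline{W_i}^{S}$, and write $W_i=S_{\tilde C_i}$: along $q_n\in W_i$ with $q_n\to p_0$ one has $f(q_n)\to f_0$ and $|\grad f|(q_n)\to 0$; but on $S_{\tilde C_i}$ the gradient of the mean curvature is nowhere zero and, at bounded distance, is small only near the boundary circle, where the mean curvature equals $2\tilde C_i^{3/2}/3$; hence $\tilde C_i=\tilde C_0$, and $p_0$ lies on the boundary circle of $W_i$ as well. Thus $W_0$ and $W_i$ are two pieces of the smooth surface $S$, both congruent to $S_{\tilde C_0}$, glued along an arc of $\gamma$ and lying on opposite sides of it; by Proposition~\ref{prop:along-curve}, $W_i$ must be the reflection of $S_{\tilde C_0}$ with respect to the plane of $\gamma$. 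Hence $S$ coincides near $p_0$ with $S_{\tilde C_0}\cup\gamma\cup(\text{this reflection})=\tilde S_{\tilde C_0}$; since this holds at every $p_0\in\gamma$ and $\gamma$ is connected, $S\supseteq\tilde S_{\tilde C_0}$ (with $\tilde S_{\tilde C_0}$ as described in Theorem~\ref{th-completeR3-1}).

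Finally, $\tilde S_{\tilde C_0}$ is complete (Theorem~\ref{th-completeR3-1}), hence closed in $\mathbb R^3$ and therefore closed in $S$, and it is also open in $S$, being a boundaryless $2$-dimensional submanifold of the boundaryless $2$-manifold $S$; as $S$ is connected, $S=\tilde S_{\tilde C_0}$, which is the assertion. I expect the third step to be the real obstacle: controlling $S$ on the far side of $\gamma$, where one must exclude a $CMC$ region accumulating at $\gamma$ (handled as in the second step, via the limiting form of \eqref{f-bicons} and continuity of $f$) and exclude a component $S_{\tilde C_i}$ with $\tilde C_i\neq\tilde C_0$, or a genuine mixture of several components of $W$, accumulating at $p_0$ — this is where the continuity of $f$ and $\grad f$ on the smooth surface $S$ (forcing $\tilde C_i=\tilde C_0$) and the rigidity of $C^\infty$-gluing along a curve furnished by Proposition~\ref{prop:along-curve} are the decisive inputs.
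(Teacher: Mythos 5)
Your overall architecture matches the paper's: identify a component $W_0$ with $S_{\tilde C_0}$ via Theorem~\ref{th:W0C0}, look at what $S$ does on the far side of the boundary circle $\gamma$, and force the reflection $S_{\tilde C_0}^-$ to appear. But there is a genuine gap at the decisive point. In your core step you write ``this forces $p_0$ to be a limit of points of $W\setminus W_0$. Let $W_i$ be a component of $W$ with $p_0\in\overline{W_i}^{S}$'' --- the existence of such a \emph{single} component is exactly what needs to be proved. A point can be a limit of points of $W\setminus W_0$ while lying in the closure of no individual component: every punctured half-neighbourhood of $p_0$ on the far side of $\gamma$ could meet infinitely many components of $W$, interlaced with regions where $\grad f=0$. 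This is precisely the third case the paper treats separately, and it is the hardest part of its proof: one shows that this mixed behaviour would produce an infinite sequence of pairwise disjoint half-surfaces $S_{\tilde C_0^n}\subset S$ whose boundary circles have radii $(\tilde C_0^n)^{-3/2}\to\tilde C_0^{-3/2}$ (via continuity of $f$), accumulating at $\gamma$ --- impossible for a regular surface. Your proposal skips this case entirely; once it is granted that a single $W_i$ accumulates at $p_0$, your identification $\tilde C_i=\tilde C_0$ and the appeal to Proposition~\ref{prop:along-curve} are in line with the paper (which in its second case argues directly that the component on the far side must be $S_{\tilde C_0}^-$, then uses completeness of $\tilde S_{\tilde C_0}$ exactly as you do).

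A secondary gap sits in your density step. From $\partial^{S}U\subseteq\partial^{S}W$ you jump to ``some $p_0\in\partial^{S}U$ lies on the boundary circle of a component $W_0$ of $W$''; again, a point of $\partial^{S}W$ need not belong to $\overline{W_0}^{S}$ for any component $W_0$ (it may only be an accumulation point of infinitely many of them), and without anchoring $p_0$ to a boundary circle you cannot conclude $f(p_0)=2\tilde C_0^{3/2}/3\neq 0$. That nonvanishing is essential: if the $CMC$ region $U$ were minimal, $f\equiv 0$ on $U$ forces $f(p_0)=0$ and the limit of \eqref{f-bicons} reads $0=0$, yielding no contradiction. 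Note that the paper does not prove density of $W$ at all; it fixes $p_0\in\partial^{S}W_0$ (where $f(p_0)\neq 0$ is already known from Theorem~\ref{th:W0C0}) and runs the trichotomy there, which both avoids this issue and renders your density step unnecessary. I would restructure your argument accordingly and add the accumulating-circles contradiction for the mixed case.
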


\begin{proof}
We denote by $W_0$ a connected component of the non-empty open subset
$$
W=\left\{p\in S \ | \ (\grad f)(p)\neq0\right\}.
$$
Then there exists a unique $\tilde{C}_0$ such that $W_0=S_{\tilde{C}_0}$ and $\overline{W_0}^{S}=\overline{S_{\tilde{C}_0}}^{\tilde{S}_{\tilde{C}_0}}$. We denote, the surface $S_{\tilde{C}_0}$ by $S_{\tilde{C}_0}^+$. Let $p_0\in \partial^S W_0$, i.e., $p_0$ is a point on a circle of radius $\tilde{C}_0^{-3/2}$. We have three cases.

First, assume that there exists $\varepsilon_0>0$ such that $\grad f$ vanishes at any point of $B^2\left(p_0;\varepsilon_0\right)\setminus \left(B^2\left(p_0;\varepsilon_0\right)\cap \overline{W_0}^S\right)$. Then, there exists a sequence $\left(p^1_n\right)_{n\in\mathbb{N}^\ast}$ in $W_0$, with $p^1_n\neq p_0$, for any $n\in\mathbb{N}^\ast$ which converges to $p_0$, with respect to the distance function $d_S$ on $S$, and another sequence $\left(p^2_n\right)_{n\in\mathbb{N}^\ast}$ in $B^2\left(p_0;\varepsilon_0\right)\setminus \left(B^2\left(p_0;\varepsilon_0\right)\cap \overline{W_0}^S\right)$, with $p^2_n\neq p_0$, for any $n\in\mathbb{N}^\ast$ which converges to $p_0$, with respect to the same $d_S$. Since $W_0$ and $B^2\left(p_0;\varepsilon_0\right)\setminus \left(B^2\left(p_0;\varepsilon_0\right)\cap \overline{W_0}^S\right)$ are open in $S$, $\grad f$ is different from zero at any point of $W_0$ and $\grad f$ vanishes at any point of $B^2\left(p_0;\varepsilon_0\right)\setminus \left(B^2\left(p_0;\varepsilon_0\right)\cap \overline{W_0}^S\right)$, we obtain a contradiction as in the proof of Theorem \ref{th:contradiction}.

Second, let us assume that there exists $\varepsilon_0>0$ such that $\grad f$ is different from zero at any point of $B^2\left(p_0;\varepsilon_0\right)\setminus \left(B^2\left(p_0;\varepsilon_0\right)\cap \overline{W_0}^S\right)$. As the set $B^2\left(p_0;\varepsilon_0\right)\setminus \left(B^2\left(p_0;\varepsilon_0\right)\cap \overline{W_0}^S\right)$ is connected, it belongs to a connected component $\tilde{W}_0$ of $W$. It is clear that $\tilde{W}_0$ has to coincide with $S_{\tilde{C}_0}^-$, where $S_{\tilde{C}_0}^-$ is the surface obtained by symmetry of $S_{\tilde{C}_0}^+$ with respect to the plane where its boundary lies, and thus $\tilde{S}_{\tilde{C}_0}\subset S$. Since $\tilde{S}_{\tilde{C}_0}$ is complete, then it cannot be extendible, so $\tilde{S}_{\tilde{C}_0}=S$.

In the last case, assume that for any $\varepsilon_n>0$, in $B^2\left(p_0;\varepsilon_n\right)\setminus \left(B^2\left(p_0;\varepsilon_n\right)\cap \overline{W_0}^S\right)$ there exists at least a point $p^1_{n}$ such that $(\grad f)\left(p^1_{n}\right)=0$ and at least a point $p^2_n$ such that $(\grad f)\left(p^2_n\right)\neq0$.

Let us consider an arbitrary $\varepsilon_1>0$. Then there exists $U_1$ an open subset of $S$ which contains $p^2_1$, which is connected, $U_1\subset B^2\left(p_0;\varepsilon_1\right)\setminus \left(B^2\left(p_0;\varepsilon_1\right)\cap \overline{W_0}^S\right)$ and $\grad f$ does not vanish at any point of $U_1$. If we consider the connected component of $W$ which contains $U_1$, one can notice that this is a surface $S_{\tilde{C}_0^1}\subset S$ and $\overline{S_{\tilde{C}_0^1}}^S=\overline{S_{\tilde{C}_0^1}}^{\tilde{S}_{\tilde{C}_0^1}}$. Moreover, $S_{\tilde{C}_0^1}\cap S_{\tilde{C}_0}^+=\emptyset$.

We note that the boundaries of any two connected components of $W$ either coincide or are disjoint.

Also, it is easy to see that the boundary of $S_{\tilde{C}_0^1}$ does not intersect the boundary of $S_{\tilde{C}_0}^+$. Indeed, if these two boundaries would intersect, they would coincide and $S_{\tilde{C}_0^1}=S_{\tilde{C}_0}^-$. Therefore, $S=\tilde{S}_{\tilde{C}_0}$ and we come to a contradiction as the gradient of the mean curvature function is different from zero at any point of $S_{\tilde{C}_0}^-$ but $(\grad f)(p^1_n)=0$.

Next, we can consider $\varepsilon_2>0$ such that $B^2\left(p_0;\varepsilon_2\right)\setminus \left(B^2\left(p_0;\varepsilon_2\right)\cap \overline{W_0}^S\right)$ does not intersect the boundary of $S_{\tilde{C}_0^1}$. Using the same argument, one obtains another surface $S_{\tilde{C}_0^2}\subset S$ such that $S_{\tilde{C}_0^2}\cap S_{\tilde{C}_0}^+=\emptyset$ and $S_{\tilde{C}_0^2}\cap S_{\tilde{C}_0^1}=\emptyset$. Moreover, $\overline{S_{\tilde{C}_0^2}}^{\tilde{S}_{\tilde{C}_0^2}}\cap \overline{S_{\tilde{C}_0}^+}^{\tilde{S}_{\tilde{C}_0}}=\emptyset$ and $\overline{S_{\tilde{C}_0^2}}^{\tilde{S}_{\tilde{C}_0^2}}\cap \overline{S_{\tilde{C}_0^1}}^{\tilde{S}_{\tilde{C}_0^1}}=\emptyset$. We continue in the same way and obtain a sequence of surfaces $\left(S_{\tilde{C}_0^n}\right)_{n\in\mathbb{N}^\ast}\subset S$, which are mutually disjoint and $S_{\tilde{C}_0^n}\cap S^+_{\tilde{C}_0}=\emptyset$, for any $n\in\mathbb{N}^\ast$. Their boundaries are mutually disjoint too and they do not intersect the boundary of $S_{\tilde{C}_0}^+$. For any $n\in \mathbb{N}^\ast$, the boundary of $S_{\tilde{C}_0^n}$ is a circle of radius $\left(\tilde{C}_0^n\right)^{-3/2}=2/\left(3\tilde{f}_{\tilde{C}_0^n}\right)$, where $\tilde{f}_{\tilde{C}_0^n}$ is the mean curvature function of $\tilde{S}_{\tilde{C}_0^n}$ evaluated at some point of the boundary of $S_{\tilde{C}_0^n}$. But $\tilde{f}_{\tilde{C}_0^n}=f$ on $\overline{S_{\tilde{C}_0^n}}^{\tilde{S}_{\tilde{C}_0^n}}$, where $f$ is the mean curvature function of $S$. As $f$ is continuous, the sequence $\left(\tilde{C}_0^n\right)^{-3/2}$ converges to the radius of the circle that gives the boundary of $S_{\tilde{C}_0}^+$, which is $\left(\tilde{C}_0\right)^{-3/2}=2/\left(3\tilde{f}_{\tilde{C}_0}\right)$, where $\tilde{f}_{\tilde{C}_0}$ is the mean curvature function of $\tilde{S}_{\tilde{C}_0}$, $\tilde{f}_{\tilde{C}_0}=f$ on $\overline{S_{\tilde{C}_0}}^{\tilde{S}_{\tilde{C}_0}}$, evaluated at some point of the boundary of $S_{\tilde{C}_0}$.

Since $S$ is a regular surface, we cannot have such a sequence $\left(S_{\tilde{C}_0^n}\right)_{n\in\mathbb{N}^\ast}\subset S$.
\end{proof}


\begin{thebibliography}{99}

\bibitem{CMOP14} R.~Caddeo, S.~Montaldo, C.~Oniciuc, P.~Piu, \textit{Surfaces in three-dimensional space forms with divergence-free stress-bienergy tensor}, Ann. Mat. Pura Appl. (4) 193 (2014), 529--550.

\bibitem{ES64} J.~Eells, J. H.~Sampson, \textit{Harmonic mappings of Riemannian manifolds}, Amer. J. Math. 86 (1964), 109--160.

\bibitem{FNO16} D.~Fetcu, S.~Nistor, C.~Oniciuc, \textit{On biconservative surfaces in $3$-dimensional space forms}, Comm. Anal. Geom. (5) 24 (2016), 1027--1045.

\bibitem{FOP15} D.~Fetcu, C.~Oniciuc, A.L.~Pinheiro, \textit{CMC biconservative surfaces in $\mathbb{S}^n\times\mathbb{R}$ and $\mathbb{H}^n\times\mathbb{R}$}, J. Math. Anal. Appl. 425 (2015), 588--609.

\bibitem{F15} Y.~Fu, \textit{Explicit classification of biconservative surfaces in Lorentz $3$-space forms}, Ann. Mat. Pura Appl. (4) 194 (2015), 805--822.

\bibitem{FT16} Y.~Fu, N.C.~Turgay, \textit{Complete classification of biconservative hypersurfaces with diagonalizable shape operator in Minkowski 4-space}, Internat. J. Math. (5) 27 (2016), 1650041, 17 pp.

\bibitem{HV95} Th.~Hasanis, Th.~Vlachos, \textit{Hypersurfaces in $E^4$ with harmonic mean curvature vector field}, Math. Nachr. 172 (1995), 145--169.

\bibitem{H24} D.~Hilbert, \textit{Die grundlagen der physik}, Math. Ann. 92 (1924), 1--32.

\bibitem{J86} G.Y.~Jiang, \textit{$2$-harmonic maps and their first and second variational formulas}, Chinese Ann. Math. Ser. A7 (4) (1986), 389--402.

\bibitem{J87} G.Y.~Jiang, \textit{The conservation law for $2$-harmonic maps between Riemannian manifolds}, Acta Math. Sinica 30 (1987), 220--225.

\bibitem{LMO08} E.~Loubeau, S.~Montaldo, C.~Oniciuc, \textit{The stress-energy tensor for biharmonic maps}, Math. Z. 259 (2008), 503--524.

\bibitem{MOR16} S.~Montaldo, C.~Oniciuc, A.~Ratto, \textit{Proper biconservative immersions in the Euclidean space}, Ann. Mat. Pura Appl. (4) 195 (2016), 403--422.

\bibitem{MR05} S.~Montiel, A. Ros, \textit{Curves and Surfaces}, Graduate Studies in Mathematics, vol. 69, Am. Math. Soc., 2005.

\bibitem{N16} S.~Nistor, \textit{Complete biconservative surfaces in $\mathbb{R}^3$ and $\mathbb{S}^3$}, J. Geom. Phys. 110 (2016), 130--153.

\bibitem{N17} S.~Nistor, \textit{On biconservative surfaces}, preprint, arXiv: 1704.04598.

\bibitem{NO17} S.~Nistor, C.~Oniciuc, \textit{Global properties of biconservative surfaces in $\mathbb{R}^3$ and $\mathbb{S}^3$}, Proceedings Book of International Workshop on Theory of Submanifolds, June 2-4, 2016, Istanbul, Turkey, vol. 1 (2016), 30--56.

\bibitem{O10} Y.L.~Ou, \textit{Biharmonic hypersurfaces in Riemannian manifolds}, Pacific J. Math. 248 (2010), 217--232.

\bibitem{S12} T.~Sasahara, \textit{Surfaces in Euclidean 3-space whose normal bundles are tangentially biharmonic}, Arch. Math. (Basel) (3) 99 (2012), 281--287.

\bibitem{S15} T.~Sasahara, \textit{Tangentially biharmonic Lagrangian H-umbilical submanifolds in complex space forms}, Abh. Math. Semin. Univ. Hambg. 85 (2015), 107--123.

\bibitem{T15} N.C.~Turgay, \textit{$H$-hypersurfaces with three distinct prinicipal curvatures in the Euclidean spaces}, Ann. Math. 194 (2015), 1795--1807.

\bibitem{UT16} A.~Upadhyay, N.C.~Turgay, \textit{A Classification of Biconservative Hypersurfaces in a Pseudo-Euclidean Space}, J. Math. Anal. Appl. (2) 444 (2016), 1703--1720.

\end{thebibliography}
\end{document}